\newtheorem{theorem}{\sc Theorem}[section]
\newtheorem{lem}[theorem]{\sc Lemma}
\newtheorem{prop}[theorem]{\sc Proposition}
\begin{document}
\author{Eloisa Detomi}
\address{Dipartimento di Matematica Pura e Applicata, Universit\`a di Padova,
 Via Trieste 63, 35121 Padova , Italy}
\email{detomi@math.unipd.it}
\author{Marta Morigi}
\address{Dipartimento di Matematica, Universit\`a di Bologna,
Piazza di Porta San Donato 5, 40126 Bologna, Italy}
\email{mmorigi@dm.unibo.it}
\author{Pavel Shumyatsky}
\address{Department of Mathematics, University of Brasilia,
Brasilia-DF, 70910-900 Brazil ,pavel@unb.br}
\email{pavel@mat.unb.br}

\title{Bounding the Exponent of a Verbal Subgroup}
\thanks{2012 {\it Mathematics Subject Classification. } 20F10;  20F45; 20F14.}
\thanks{Dedicated to Federico Menegazzo on the occasion of his 70th birthday}

\keywords{Verbal subgroup, Commutator words, Engel words, Exponent}

\maketitle

\begin{abstract}
 We deal with the following conjecture. If $w$ is a group word and $G$ is a finite group in which any nilpotent subgroup generated by $w$-values has exponent dividing $e$, then the exponent of the verbal subgroup $w(G)$ is bounded in terms of $e$ and $w$ only. We show that this is true in the case where $w$ is either the $n$th Engel word or the word  $[x^n,y_1,y_2,\dots,y_k]$ (Theorem A). Further, we show that for any positive integer $e$ there exists a
number $k=k(e)$ such that if  $w$ is a word and  $G$ is a finite group
in which any nilpotent subgroup generated by products of $k$ values of
the word $w$ has exponent dividing $e$, then the exponent of the verbal
subgroup $w(G)$ is bounded in terms of $e$ and $w$ only (Theorem B).
\end{abstract}



\section{Introduction}
If $w$ is a group word in variables $x_1, x_2, \ldots, x_m$ we think of it as a function defined on any given group $G$. The subgroup of $G$ generated by the values of $w$ is called the verbal subgroup of $G$ corresponding to the word $w$. This will be denoted $w(G)$. The study of verbal subgroups of groups is a classical topic of group theory. It dates back to the theory of varieties of groups and the work of P. Hall.

A number of outstanding results about words in finite groups have been obtained in recent years. In this context we mention Shalev's theorem that for any nontrivial group word $w$, every element of every sufficiently large finite simple group is a product of at most three $w$-values \cite{shal}, and the proof by Liebeck, O'Brien, Shalev and Tiep \cite{lost} of Ore's conjecture: Every element of a finite simple group is a commutator. Another significant result is that of Nikolov and Segal that if $G$ is an $m$-generated finite group, then every element of $G'$ is a product of $m$-boundedly many commutators \cite{nisegal}. 

It was shown in \cite{S} that if $w$ is a multilinear commutator and $G$ a finite group in which any nilpotent subgroup generated by $w$-values has exponent dividing $e$, then the exponent of the verbal subgroup $w(G)$ is bounded in terms of $e$ and $w$ only.

Recall that a group has exponent $e$ if $x^e=1$ for all $x\in G$ and $e$ is the least positive integer with that 
property. Multilinear commutators (outer commutator words) are words which are obtained by nesting commutators, 
but using always different indeterminates. For example, the word $[[x_1,x_2],[x_3,x_4,x_5],x_6]$ is a multilinear 
commutator. On the other hand, many important words are not multilinear commutators. In particular, 
the $n$th Engel word, defined inductively by $$[x,_1\,y]=[x,y]\;\;\mathrm{ and }\;\; [x,_n\,y]=[[x,_{n-1}\,y],y],$$ is 
not a multilinear commutator when $n\geq 2$.

In view of the aforementioned result the following conjecture seems plausible.
\medskip

{\sc Conjecture.} 
{\it Let $w$ be any word and $G$ a finite group in which any nilpotent subgroup generated by $w$-values has exponent dividing $e$. Then the exponent of the verbal subgroup $w(G)$ is bounded in terms of $e$ and $w$ only.}
\medskip

Recall that a commutator word is a word $w$ such that $w(G) = 1$ for all abelian groups $G;$ this is equivalent to the requirement that for each variable $x$
 appearing in $w$ the sum of all exponents in the occurrences of $x$ in $w$ is zero. It is easy to see that for non-commutator words the above conjecture holds.

 The result in \cite{S} and our first result provide additional evidence in favor of this conjecture.
\medskip

{\sc Theorem A.} {\it Let $w$ be the $n$th Engel word or the word $[x^n,y_1,y_2,\dots,y_k]$. Assume that $G$ is a finite group in which any nilpotent subgroup generated by $w$-values has exponent dividing $e$. Then the exponent of the verbal subgroup $w(G)$ is bounded in terms of $e$ and $w$ only.}
\medskip

So far we have been unable to prove the conjecture for arbitrary words $w$. Yet, we obtained a result in the same direction that deals with arbitrary words. If $w$ is a word in $m$ variables, we define inductively $w_j$ as follows: $w_1=w$ and $w_j=w_{j-1}\cdot w$, where the variables appearing in $w_{j-1}$ and $w$ are disjoint, so that $w_j$ is a word in $mj$ variables. For instance, if $w=[x,y,y]$, then $w_2=[x_1, y_1,y_1][x_2,y_2,y_2]$. We remark that for any positive integer $j$, any group $G$ and any word $w$ we have $w(G)=w_j(G)$. 
\medskip

{\sc Theorem B.} {\it Given a positive integer $e$, there exists a number $k=k(e)$ such
that if $w$ is a word and $G$ is a finite group in which any nilpotent
subgroup generated by $w_k$-values has exponent dividing $e$, then the
exponent of the verbal subgroup $w(G)$ is bounded in terms of $e$ and
$w$ only.}
\medskip

We emphasize that the hypothesis of Theorem B does not imply that the word $w$ has bounded width in $G$. The reader can consult \cite{segal} for questions related to the width of a word in a finite group. It is easy to see that since we make no assumptions on the number of generators of $G$, the word $w$ can have arbitrarily large width.

We make no attempts to write down explicit estimates for the exponents of $w(G)$ in our results. 
 The next section is devoted to the proof of Theorem A. The proof of Theorem B will be given in Section 3. Both proofs depend on the classification of finite simple groups and on Zelmanov's solution of the restricted Burnside problem \cite{ze1,ze2}.

\section{Theorem A}

Throughout the paper we use the expression ``\{a,b, \ldots\}-bounded" to mean ``bounded from above by some function depending only on a,b \ldots". We say that a word $w$ has the property (bb) if there exists a constant $b$ depending on $w$ such that every finite group $H$ satisfying the law $w\equiv 1$ is an extension of a group of exponent dividing $b$ by a group which is nilpotent of class at most $b$. We note that the Engel word has the property (bb) by the main theorem in \cite{BM}; the fact that the word $w=[x^n,y_1,y_2,\dots,y_k]$  has the property (bb) will be proved shortly. As usual, $Z_i(H)$ and $\gamma_i(H)$  denote the $i$th term of the upper and lower central series of a group $H$, respectively. We use $\gamma_\infty(H)$ to denote the nilpotent residual of $H$, that is $\gamma_\infty(H)=\cap_{i\ge1}\gamma_i(H)$.

The next lemma is Lemma 2.2 of \cite{CS}. In the case where $k=1$ this is a well-known result, due to Mann \cite{M}.
\begin{lem}\label{mann+} If $G$ is a finite group such that $G/Z_k(G)$ has exponent $m$, then $\gamma_{k+1}(G)$ has $\{k,m\}$-bounded exponent.
 \end{lem}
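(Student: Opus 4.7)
The plan is to argue by induction on $k$, with Mann's theorem (the case $k=1$ noted above) serving as the base case. For the inductive step, suppose the lemma holds for $k-1$, and let $G$ be a finite group with $G/Z_k(G)$ of exponent $m$. My first step is to pass to $\bar G = G/Z(G)$: using $Z_{k-1}(\bar G) = Z_k(G)/Z(G)$, the quotient $\bar G/Z_{k-1}(\bar G) = G/Z_k(G)$ still has exponent $m$, so by induction $\gamma_k(\bar G) = \gamma_k(G)Z(G)/Z(G)$ has $\{k,m\}$-bounded exponent, say dividing $t$. Writing $N = \gamma_k(G)$, this is equivalent to $N^t \leq Z(G)$.

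My second step converts this ``central power'' condition into a bound on the commutator subgroup of $N$ itself: since $N^t \leq Z(G) \leq Z(N)$, the quotient $N/Z(N)$ has exponent dividing $t$, and applying Mann's theorem to $N$ yields a $\{k,m\}$-bounded exponent $s$ for $N' = [N,N]$.

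To complete the argument, I pass to the quotient $\tilde G = G/N'$, in which $\tilde N := N/N'$ is abelian and normal and still satisfies $\tilde N^t \leq Z(\tilde G)$ (the image of $Z(G)$ remains central in $\tilde G$). Given $\tilde a \in \tilde N$ and $\tilde g \in \tilde G$, the commutator $[\tilde a, \tilde g]$ lies in the abelian group $\tilde N$ and hence commutes with $\tilde a$, so the standard expansion
\[
 1 = [\tilde a^t, \tilde g] = \prod_{i=0}^{t-1} [\tilde a, \tilde g]^{\tilde a^i} = [\tilde a, \tilde g]^t
\]
forces $[\tilde N, \tilde G]$ to have exponent dividing $t$. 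Pulling back, $\gamma_{k+1}(G) = [N,G]$ is an extension of $N' \cap [N,G]$ (exponent dividing $s$) by $[N,G]/N' \cong [\tilde N, \tilde G]$ (exponent dividing $t$), hence has exponent dividing $st$, which is $\{k,m\}$-bounded.

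The main obstacle I expect is this third step: going from the hypothesis $N^t \leq Z(G)$, which directly controls only the action of $N$ on itself, to a bound on $[N,G]$, which also involves the action of the whole group $G$ on $N$. My plan resolves this by first using Mann internally to dispose of the non-abelian part of $N$ modulo a subgroup of bounded exponent, and then exploiting the resulting abelian structure in $\tilde G$, where the central-power hypothesis linearises the commutator identity into the clean relation $[\tilde a, \tilde g]^t = 1$.
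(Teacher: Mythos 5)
Your argument is correct: the induction on $k$ via $G/Z(G)$, the internal application of Mann's theorem to $N=\gamma_k(G)$ to kill $N'$ modulo bounded exponent, and the linearised identity $[\tilde a^t,\tilde g]=[\tilde a,\tilde g]^t$ in the abelianised situation all check out, and the final extension argument gives exponent dividing $st$. Note that the paper itself offers no proof of this lemma --- it is quoted as Lemma 2.2 of \cite{CS}, with the case $k=1$ attributed to Mann --- and your induction-on-$k$ reduction to Mann's theorem is essentially the standard argument used there, so there is nothing to contrast.
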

The fact that the word $w=[x^n,y_1,y_2,\dots,y_k]$  has the property (bb) is now straightforward.
\begin{lem}\label{gamma} The word  $w=[x^n,y_1,y_2,\dots,y_k]$ has the property (bb). In particular, if $H$ is a finite group satisfying the law $w\equiv1$, then $\gamma_{k+1}(H)$ has $\{k,n\}$-bounded exponent.
   \end{lem}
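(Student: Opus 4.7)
The key observation is that the hypothesis $w \equiv 1$ on $H$, where $w = [x^n, y_1, \ldots, y_k]$, says exactly that $[x^n, y_1, \ldots, y_k] = 1$ for every choice of $x, y_1, \ldots, y_k \in H$. By the definition of the upper central series, this is equivalent to $x^n \in Z_k(H)$ for every $x \in H$. In other words, the quotient $H/Z_k(H)$ has exponent dividing $n$.

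Once this reformulation is in place, the conclusion is essentially immediate from Lemma \ref{mann+} (the generalized Mann lemma cited just above): applying it with $m = n$ yields that $\gamma_{k+1}(H)$ has $\{k,n\}$-bounded exponent, say dividing some constant $b_0 = b_0(k,n)$. This proves the ``In particular'' part of the statement.

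To extract property (bb), set $N = \gamma_{k+1}(H)$ and $b = \max\{k, b_0\}$. Then $N$ is a normal subgroup of exponent dividing $b$, and $H/N = H/\gamma_{k+1}(H)$ is nilpotent of class at most $k \leq b$. Since $b$ depends only on $n$ and $k$, i.e.\ only on the word $w$, this exhibits $H$ as the required extension and establishes property (bb).

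There is no real obstacle here: everything reduces to recognizing the law $[x^n, y_1, \ldots, y_k] \equiv 1$ as the statement ``$n$th powers lie in $Z_k$'' and then quoting Lemma \ref{mann+}. The substantive content of the lemma is entirely inherited from Mann's theorem (and its iterated version from \cite{CS}).
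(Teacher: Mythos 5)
Your proof is correct and follows exactly the paper's argument: the law $[x^n,y_1,\dots,y_k]\equiv 1$ is reinterpreted as $H^n\le Z_k(H)$, and Lemma \ref{mann+} then bounds the exponent of $\gamma_{k+1}(H)$; your explicit verification of property (bb) via $N=\gamma_{k+1}(H)$ is the same (implicit) step the paper takes.
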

\begin{proof} We have $H^n\le Z_{k}(H)$ and therefore $H/Z_k(H)$ has exponent dividing $n$. Thus, by Lemma \ref{mann+}, $\gamma_{k+1}(H)$ has $\{k,n\}$-bounded exponent.
\end{proof}

If $H$ is a subgroup of $G$, we write $w_{j,G}(H)$ for the subgroup generated by all $w_j$-values of $G$ lying in $H$. We write $w_{G}(H)$ for $w_{1,G}(H)$.

The class of all finite groups in which every nilpotent subgroup generated by $w_k$-values has exponent dividing $e$ will be denoted by $X(k,e,w)$. We write $X(e,w)$ in place of $X(1,e,w)$. If our conjecture 
 holds, the exponents of all groups in $X(e,w)$ have a common bound. Our Theorem A says that this is so when $w$ is either the $n$th Engel word or the word $[x^n,y_1,y_2,\dots,y_k]$. Theorem B says that for any $w$ and $e$ there exists $k$ such that the exponents of all groups in $X(k,e,w)$ have a common bound.

It seems that the class $X(e,w)$ is not closed with respect to quotients. Thus, it will be convenient to replace the condition defining the class $X(e,w)$ with a more manageable condition. In what follows $Y(e,w)$ will denote the class of finite groups $G$ such that every $w$-value has order dividing $e$ and $w(P)$ has exponent dividing $e$ for every $p$-Sylow subgroup of $G$.
It is clear that the class $Y(e,w)$ is closed under taking quotients of its members.

The next lemma is a straightforward consequence of \cite[5.2.5]{Rob}.
\begin{lem}\label{rob} Let $w$ be a word with the property (bb) and $G$ a finite group satisfying the law $w\equiv1$. Assume that $G$ is generated by elements of orders dividing $r$. Then $G$ has $(r,w)$-bounded exponent.
\end{lem}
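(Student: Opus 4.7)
The plan is to exploit property (bb) to reduce the claim to two separate exponent bounds and combine them. By the definition of property (bb), the hypothesis $w\equiv 1$ on $G$ yields a constant $b=b(w)$ and a normal subgroup $N\trianglelefteq G$ of exponent dividing $b$ such that $G/N$ is nilpotent of class at most $b$. So the problem naturally splits into controlling the exponent of $N$ (which is immediate from property (bb)) and controlling the exponent of the nilpotent quotient $G/N$.

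For the quotient, I would note that since $G$ is generated by elements of order dividing $r$, the same holds for its image $G/N$. Thus $G/N$ is a finite nilpotent group of class at most $b$ generated by elements of order dividing $r$. The classical result \cite[5.2.5]{Rob} then gives an $(r,b)$-bound, and hence an $(r,w)$-bound, on the exponent of $G/N$; call this bound $s=s(r,w)$.

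Finally, I would assemble the conclusion: for every $g\in G$ we have $g^s\in N$, and since $N$ has exponent dividing $b$, we get $g^{sb}=(g^s)^b=1$. Thus the exponent of $G$ divides $sb$, which is $(r,w)$-bounded, as required.

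There is essentially no obstacle in this argument once property (bb) and the Robinson bound are in hand; the lemma amounts to stringing them together via a short extension argument. The only substantive input is the nontrivial fact hidden inside \cite[5.2.5]{Rob}, namely that a nilpotent group of bounded class generated by elements of bounded order has bounded exponent.
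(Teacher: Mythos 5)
Your argument is correct and is exactly the ``straightforward consequence'' the paper has in mind: it gives no proof beyond citing \cite[5.2.5]{Rob}, and your decomposition via property (bb) into a bounded-exponent normal subgroup and a bounded-class nilpotent quotient generated by elements of order dividing $r$, followed by multiplying the two exponent bounds, is precisely how that citation is meant to be applied.
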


It is clear from the definitions that $X(e,w)\subseteq Y(e,w)$. In the case where $w$ has the property (bb) an ``almost converse" also  holds.

\begin{lem}\label{quotiens} Assume that $e\geq 1$ and the word $w$ has the property (bb). Then there exists an $(e,w)$-bounded integer $f$ such that $Y(e,w)\subseteq X(f,w)$.
\end{lem}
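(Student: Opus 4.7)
The plan is to fix $G \in Y(e,w)$ together with an arbitrary nilpotent subgroup $N$ of $G$ generated by $w$-values, and to exhibit an $(e,w)$-bounded integer $f$ with $\exp(N)\mid f$. I would treat the short exact sequence $1\to w(N)\to N\to N/w(N)\to 1$ by bounding the exponent of each end separately; the required $f$ is then the product of those two bounds.

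For the quotient, observe that $N/w(N)$ automatically satisfies the law $w\equiv 1$, and that its generators are images of $w$-values of $G$, which by the first clause in the definition of $Y(e,w)$ have order dividing $e$. Hence $N/w(N)$ is a finite group satisfying $w\equiv 1$ and generated by elements of orders dividing $e$. Since $w$ has property (bb), Lemma \ref{rob} applies and gives that $N/w(N)$ has $(e,w)$-bounded exponent, say $m$. Equivalently, $N^{m}\leq w(N)$.

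For the subgroup $w(N)$, I would use the fact that $N$ is finite and nilpotent: it splits as the direct product $N=\prod_{p} N_{p}$ of its Sylow subgroups, and since a word is evaluated coordinate-wise on a direct product, one has $w(N)=\prod_{p} w(N_{p})$. Each $N_{p}$ is a $p$-subgroup of $G$, hence sits inside some Sylow $p$-subgroup $P$ of $G$, so $w(N_{p})\leq w(P)$; and by the second clause in the definition of $Y(e,w)$ the exponent of $w(P)$ divides $e$. Therefore $\exp(w(N))\mid e$, and combined with $N^{m}\leq w(N)$ this forces $\exp(N)\mid me$. Taking $f=me$ completes the argument.

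I do not foresee a real obstacle: the lemma is essentially a matter of reading off each of the two clauses of the definition of $Y(e,w)$ at the right stage, with property (bb) and Lemma \ref{rob} supplying the bridge that turns bounded generator orders into a bounded exponent on the quotient. The only minor point to verify is that verbal subgroups commute with direct products, which is immediate from the coordinate-wise evaluation of a word.
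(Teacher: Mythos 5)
Your proof is correct and follows essentially the same route as the paper: bound $N/w(N)$ via Lemma \ref{rob} using that the generating $w$-values have order dividing $e$, and bound $w(N)$ by decomposing the nilpotent group $N$ into its Sylow subgroups and invoking the Sylow clause of the definition of $Y(e,w)$. Your explicit remark that each $N_p$ embeds in a Sylow $p$-subgroup of $G$ (so that $w(N_p)\le w(P)$) is a small but welcome clarification of a step the paper leaves implicit.
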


\begin{proof} Choose $G\in Y(e,w)$. Let $H$ be a nilpotent subgroup of $G$ generated by $w$-values. Since  $H/w(H)$ satisfies the hypotheses of
Lemma \ref{rob}, it follows that  $H/w(H)$ has $(e,w)$-bounded exponent.
Moreover, if $H=P_1\times\dots\times P_r$, where the $P_i$'s are the Sylow subgroups of $H$, we have $w(H)=w(P_1)\times\dots\times w(P_r)$. As $G\in  Y(e,w)$ the subgroup $w(P_i)$ has exponent dividing $e$ for every $i$.  It follows that $H$ has $(e,w)$-bounded exponent, as required.
\end{proof}

A tower of height $r$ in a finite group $G$ is a subgroup $T$ of the form $T=P_1\cdots P_r$ where
 \begin{itemize}
  \item[(1)] $P_i$ is a $p_i$-group for $i=1,\dots,r$, with $p_i$ a prime number.
  \item[(2)] $P_i$ normalizes $P_j$ for $i<j$.
  \item[(3)] $[P_i,P_{i-1}]=P_i$ for $i=2,\dots,r$.
 \end{itemize}
Let us denote by $\mathrm{Fit}(G)$ the Fitting subgroup of $G$ and by $F_i(G)$ the $i$th term of the upper Fitting series of $G$, defined recursively by $F_1(G)=\mathrm{Fit}(G)$ and $F_i(G)/F_{i-1}(G)=\mathrm{Fit}(G/F_{i-1}(G))$. If $G$ is a finite soluble group, the least number $h$ with the property that $F_h(G)=G$, is called the Fitting height of $G$. It is easy to see that the Fitting height of a tower $T=P_1\cdots P_r$ equals precisely $r$ (note that $P_2 \cdots P_r \leq \gamma_\infty( T)$).
Moreover a finite soluble group $G$ has Fitting height $h$ if and only if $h$ is the maximal number such that $G$ possesses a tower of height $h$ (see for instance Section 1 of \cite{T}).
A proof of the next lemma can be found in \cite[Lemma 2.2]{S}.
\begin{lem}\label{coprimeaction}  Let $A$ be a group of automorphisms of a finite group $G$ such that $(|A|,|G|)=1$. Suppose that $B$ is a normal subset of $A$ such that $A=\langle B\rangle$ and let $k\ge 1$ be an integer. Then $[G,A]$ is generated by the subgroups of the form $[G,b_1,\dots,b_k]$, where $b_1,\dots,b_k\in B$.
\end{lem}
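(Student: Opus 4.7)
The plan is to split the argument into two stages: first establish the case $k=1$ by direct commutator calculus, and then use the coprime action of each cyclic subgroup $\langle b\rangle$ to boost it to arbitrary $k$.

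For the base case $k=1$, let $K:=\langle [G,b]\mid b\in B\rangle$. Since $B$ is an $A$-normal subset, one has $[G,b]^a=[G^a,b^a]=[G,b^a]$ with $b^a\in B$, so $K$ is $A$-invariant. To verify $[G,A]\le K$, I would show $[g,a]\in K$ for every $g\in G$ and $a\in A$; because $A=\langle B\rangle$, the element $a$ is a word in $B\cup B^{-1}$, and the identities $[g,xy]=[g,y]\,[g,x]^{y}$ and $[g,x^{-1}]=\bigl([g,x]^{x^{-1}}\bigr)^{-1}$ combined with the $A$-invariance of $K$ allow a routine induction on the length of that word to place $[g,a]$ inside $K$.

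For $k\ge 2$, I would fix $b\in B$ and exploit that $\langle b\rangle$ acts coprimely on $G$, since $|\langle b\rangle|$ divides $|A|$. The standard coprime-action identity $[H,\langle b\rangle,\langle b\rangle]=[H,\langle b\rangle]$ (valid for every $\langle b\rangle$-invariant $H\le G$) together with the $k=1$ case applied to $\langle b\rangle$ acting on $G$ (with $B=\{b\}$) gives $[G,\langle b\rangle]=[G,b]$, and applied to the $\langle b\rangle$-invariant subgroup $[G,b]$ gives $[[G,b],\langle b\rangle]=[G,b,b]$. Together these yield $[G,b]=[G,b,b]$, and by straightforward iteration
\[
[G,b]=[G,\underbrace{b,\ldots,b}_{k}].
\]
The right-hand side is one of the defining subgroups of $N:=\langle [G,b_1,\ldots,b_k]\mid b_1,\ldots,b_k\in B\rangle$, so $[G,b]\le N$ for every $b\in B$. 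Combining with the base case, $[G,A]=\langle [G,b]\mid b\in B\rangle\le N$, and the reverse inclusion $N\le [G,A]$ is obvious.

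The main obstacle is the bookkeeping in the base case: one must carefully exploit the $A$-invariance of $K$ at each step of the word-length induction so that conjugations by elements of $A$ do not take us outside $K$. Once the $k=1$ statement is secure, the cyclic coprime action collapses the iterated commutator $[G,b,\ldots,b]$ back to $[G,b]$ essentially for free, and the inductive hypothesis on $k$ is not even needed.
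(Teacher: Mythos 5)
Your proof is correct and follows the standard two-step route that the paper itself defers to (it cites \cite[Lemma 2.2]{S} rather than proving the lemma): first the coprimality-free fact that $[G,A]=\langle [G,b]\mid b\in B\rangle$ via the $A$-invariant normal subgroup $K$, then the coprime-action identity $[G,b,b]=[G,b]$ iterated to reach length $k$. The only point worth flagging is that the identity $[H,\langle b\rangle,\langle b\rangle]=[H,\langle b\rangle]$ for arbitrary (not necessarily soluble) finite $H$ rests on the conjugacy part of Schur--Zassenhaus, hence implicitly on Feit--Thompson, which is unproblematic in the context of this paper.
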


\begin{lem}\label{towerEngel}
Let $w$ be the $n$th Engel word and $G\in Y(e,w)$. If $T=P_1\cdots P_r$ is a tower of height $r$ in $G$, then every $p$-Sylow subgroup of $P_2\cdots P_r$  has $(e,w)$-bounded exponent.
\end{lem}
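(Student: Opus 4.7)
The plan is to use Lemma \ref{quotiens} to replace the hypothesis $G \in Y(e,w)$ by $G \in X(f,w)$ for some $(e,w)$-bounded integer $f$, which is permitted because the Engel word has property (bb) by \cite{BM}. It then suffices to exhibit every $p$-Sylow subgroup of $N = P_2 \cdots P_r$ as a nilpotent subgroup of $G$ generated by $w$-values, since such a subgroup must have exponent dividing $f$.

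The first step is to show that each factor $P_i$ with $i \geq 2$ is generated by Engel $w$-values. Condition (3) gives $[P_i,P_{i-1}] = P_i$, and the action of $P_{i-1}$ on $P_i$ by conjugation is coprime since $p_{i-1} \neq p_i$. For each $x \in P_{i-1}$ I would apply Lemma \ref{coprimeaction} to $A = \langle x\rangle$ with $B = \{x\}$ and $k = n$ to obtain $[P_i,x] = \langle [y,_n x] : y \in P_i\rangle$. A second application of Lemma \ref{coprimeaction} with $A = B = P_{i-1}$ and $k = 1$ yields $P_i = [P_i,P_{i-1}] = \langle [P_i,x] : x \in P_{i-1}\rangle$. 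Every element $[y,_n x]$ is a $w$-value and therefore has order dividing $e$ by hypothesis, so $P_i$ is generated by $w$-values of bounded order.

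The second step identifies a specific $p$-Sylow subgroup of $N$. Set $K = \prod_{i \geq 2,\,p_i = p} P_i$, taken in increasing order of indices; condition (2) makes this a $p$-subgroup of $N$. I would prove $|K| = |N|_p$ by induction on $r$: writing $N = N'' P_r$ with $N'' = P_2 \cdots P_{r-1}$, the subgroup $N'' \cap P_r$ is a normal $p_r$-subgroup of $N''$ (since $N''$ normalises $P_r$ by condition (2)), hence is contained in every Sylow $p_r$-subgroup of $N''$. When $p_r = p$ this places $N'' \cap P_r$ inside the inductive Sylow $K''$, and when $p_r \neq p$ the intersection contributes only $p'$-factors; in either case the second isomorphism theorem combined with the inductive equality $|K''| = |N''|_p$ yields $|K| = |N|_p$. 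Step 1 shows that $K$ is generated by $w$-values, and since conjugation sends $w$-values to $w$-values and any $p$-Sylow of $N$ is a conjugate of $K$ by Sylow's theorem, every $p$-Sylow of $N$ is a nilpotent subgroup of $G$ generated by $w$-values, so has exponent dividing $f$.

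The principal technical difficulty is the Sylow identification in the second step: one must control the intersections $P_i \cap P_{i+1}\cdots P_r$ carefully, using condition (2) and the normality of $P_r$ in $N$, so that $|K|$ matches the $p$-part of $|N|$ uniformly in $r$. The coprime-action argument in the first step is standard, and once $K$ is identified as a Sylow generated by $w$-values, the conclusion is immediate from $G \in X(f,w)$.
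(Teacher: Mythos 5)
Your proof is correct and follows essentially the same route as the paper: both arguments use Lemma \ref{coprimeaction} together with $P_i=[P_i,P_{i-1}]$ to show that each $P_i$ with $i\ge 2$ is generated by Engel $w$-values (of order dividing $e$), and then bound the exponent of a Sylow subgroup of $P_2\cdots P_r$ generated by such values. Your appeal to Lemma \ref{quotiens} at the end is just a repackaging of the paper's direct combination of ``$w(P)$ has exponent dividing $e$'' with Lemma \ref{rob}, and your explicit identification of a Sylow subgroup as $\prod_{p_i=p}P_i$ merely fills in a detail the paper leaves implicit.
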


\begin{proof}
We will first show that for each $i=2,\dots,r$ the subgroup $P_i$ is generated by $w$-values. Let $N=w_{G}(P_i)$. We wish to show that $N=P_i$. We put $\overline{P_{i-1}P_i}=P_{i-1}P_i/N$
and use the bar notation in the quotient. If $\bar a \in \bar P_{i-1}$ then $[\bar P_i,_n \bar a]=1$. Let $\bar A=\langle\bar a\rangle$. By Lemma \ref{coprimeaction} $[\bar P_i,\bar A]$
is generated by subgroups of the form $[\bar P_i,_n \bar a]$, whence $[\bar P_i,\bar A]=1$. This happens for every $\bar a\in\bar P_{i-1}$ and so $[\bar P_i,\bar P_{i-1}]=1$. As $P_i=[P_i,P_{i-1}]$, it follows that $\bar P_i=1$ and $P_i=N$, as required.

Now let $P$ be a $p$-Sylow subgroup of $P_2\cdots P_r$. We know that $P$ is 
generated by $w$-values, which are all of orders dividing $e$. Since $G \in  Y(e,w)$, it follows that the exponent of $w(P)$ divides $e$. Therefore the exponent of $w(P)$ is $(e,w)$-bounded. An application of Lemma \ref{rob} now completes the proof.
\end{proof}

To prove an analogue of Lemma \ref{towerEngel} for the word $w=[x^n,y_1,y_2,\dots,y_k]$ we need a further technical lemma.
\begin{lem}\label{PQ}  Assume that $G=PQ$, where $P$ is a $p$-group and $Q$ is a $q$-group for different primes $p$ and $q$. Assume further that $P$ is normal in $G$ and $G\in Y(e,w)$, where $w$ is the word $w=[x^n,y_1,y_2,\dots,y_k]$. Then the exponent of $\gamma_{k+1}(G)$ is $(e,w)$-bounded.
 \end{lem}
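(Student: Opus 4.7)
It suffices to bound $\exp(w(G))$, since Lemma~\ref{gamma} applied to $G/w(G)$ (which satisfies $w\equiv 1$) yields an $(e,w)$-bound on $\exp(\gamma_{k+1}(G)/w(G))$.

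Because $(|P|,|Q|)=1$ we have $G=P\rtimes Q$ with $P$ and $Q$ Sylow subgroups. The Sylows commute modulo $[P,Q]$, so
$$G/[P,Q]\cong (P/[P,Q])\times Q$$
is nilpotent. Since $Y(e,w)$ is closed under quotients and $w$ evaluates coordinatewise on a direct product, $w(G/[P,Q])=w(P/[P,Q])\times w(Q)$, of exponent dividing $e$. Hence $w(G)^{e}\le [P,Q]$, and the task reduces to bounding $\exp([P,Q])$.

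By Lemma~\ref{quotiens}, $Y(e,w)\subseteq X(f,w)$ for some $(e,w)$-bounded $f$. Set $V=w_{G}(P)$; since $P\triangleleft G$, the set of $w$-values of $G$ lying in $P$ is $G$-invariant, so $V\triangleleft G$, and $V$ is a nilpotent subgroup generated by $w$-values, giving $\exp V\le f$. Pass to $\bar G=G/V$ with $\bar P=PV/V$ and $\bar Q=QV/V\cong Q$. Every $w$-value of $\bar G$ lying in $\bar P$ lifts to a $w$-value of $G$ inside $P$ and is therefore trivial in $\bar G$; so $[\bar g^{\,n},\bar h_1,\dots,\bar h_k]=\bar 1$ for all $g\in P$ and $h_i\in G$, which means $\bar P^{\,n}\le Z_k(\bar G)$. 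Lemma~\ref{coprimeaction} applied to the coprime action of $\bar Q$ on $\bar P^{\,n}$ gives
$$[\bar P^{\,n},\bar Q]=[\bar P^{\,n},{}_k\bar Q]\le [\bar P^{\,n},{}_k\bar G]=\bar 1,$$
so $\bar Q$ centralizes $\bar P^{\,n}$. In particular $\bar P^{\,n}\le Z_k(\bar P)$, and Lemma~\ref{mann+} then bounds $\exp(\gamma_{k+1}(\bar P))$ by an $(e,w)$-bounded integer $\beta$.

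Consequently $\bar P/\gamma_{k+1}(\bar P)$ is nilpotent of class at most $k$, with $\bar Q$ centralizing its $n$-th powers. A standard iteration of the coprime abelian Maschke decomposition along the lower central series of $\bar P$---at each factor $\gamma_i/\gamma_{i+1}$ the $\bar Q$-module is abelian coprime and the $n$-th powers lie in $C(\bar Q)$, forcing $[\gamma_i,\bar Q]\gamma_{i+1}/\gamma_{i+1}$ to have exponent dividing $n$---yields $[\bar P,\bar Q]^{n^{k}}\le \gamma_{k+1}(\bar P)$, whence $\exp([\bar P,\bar Q])\le n^{k}\beta$. Lifting through $V$ (using $\exp V\le f$) produces an $(e,w)$-bound on $\exp([P,Q])$, and combining with $w(G)^{e}\le [P,Q]$ yields the desired bound on $\exp(\gamma_{k+1}(G))$. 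The main technical difficulty is the iterative coprime-action argument on the lower central series: at each step one must verify that the image of $[\bar P,\bar Q]\cap\gamma_i$ in $\gamma_i/\gamma_{i+1}$ lies in the $\bar Q$-action summand of the Maschke decomposition, which uses that $\bar Q$ centralizes $\bar P^{\,n}$ (and not merely the weaker fact $\bar P^{\,n}\le Z_k(\bar G)$).
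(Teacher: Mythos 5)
Your reductions up to the point where $\bar Q$ centralizes $\bar P^{\,n}$ and $\bar P^{\,n}\le Z_k(\bar P)$ are correct, but the final step --- the ``iterative coprime Maschke decomposition'' yielding $[\bar P,\bar Q]^{n^{k}}\le\gamma_{k+1}(\bar P)$ --- does not work as described, and the ``main technical difficulty'' you flag at the end is a genuine gap, not a routine verification. Take $x\in[\bar P,\bar Q]$. The first step gives $x^{n}\in\gamma_2(\bar P)$. To iterate, you need the image of $x^{n}$ in $\gamma_2(\bar P)/\gamma_3(\bar P)$ to lie in the summand $[\gamma_2/\gamma_3,\bar Q]$, the one whose exponent you have bounded by $n$. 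But $x^{n}\in\bar P^{\,n}\le C_{\bar P}(\bar Q)$, so the image of $x^{n}$ lies in the \emph{complementary} summand $C_{\gamma_2/\gamma_3}(\bar Q)$, over which you have no exponent control. Since $[\gamma_2/\gamma_3,\bar Q]\cap C_{\gamma_2/\gamma_3}(\bar Q)=1$, requiring the image of $x^{n}$ to lie in $[\gamma_2/\gamma_3,\bar Q]$ is the same as requiring $x^{n}\in\gamma_3(\bar P)$, i.e.\ it presupposes the step you are trying to prove. So the fact that $\bar Q$ centralizes $\bar P^{\,n}$ works against, not for, the verification you propose; the iteration stalls after the first step. (The exponent bound on $[\bar P,\bar Q]$ may well still be true --- for class $2$ one gets $[x,a]^{n}=[x,a,x]^{-\binom{n}{2}}$ and hence $[x,a]^{n^{2}}=1$ by commutator collection --- but that is a different argument, and you have not supplied one for general $k$.)

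The lemma has a much shorter proof that bypasses $[P,Q]$ entirely, using only the lemmas already available. After factoring out $M=w_G(P)$ (your $V$), one has $[P,{}_k\,G^n]\le M=1$, because $[P,{}_k\,G^n]$ is generated by conjugates of inverses of $w$-values $[g^n,h_1,\dots,h_k]$ that lie in $P$. Hence $G^n\cap P\le Z_k(G^n)$; since $G^n/(G^n\cap P)$ is a $q$-group, $G^n$ is nilpotent. Then $[G^n,{}_k\,G]$ is a nilpotent subgroup generated by $w$-values, so by Lemma \ref{quotiens} it has exponent dividing $f$; factoring it out gives $w(G)=1$, and Lemma \ref{gamma} bounds $\exp(\gamma_{k+1}(G))$. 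The key structural point you miss is that the hypothesis forces $G^n$ (not just $P$) to be nilpotent, which turns the whole of $w(G)$ into a nilpotent subgroup generated by $w$-values and makes the exponent bound immediate from the $Y(e,w)\subseteq X(f,w)$ containment.
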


\begin{proof}
 Let $M=w_{G}(P)$. By Lemma \ref{quotiens} the group $G$ and all of its quotients belong to $X(f,w)$ for some $(e,w)$-bounded integer $f$. As $M$ is nilpotent, its exponent divides $f$. It is clear that $[P,_{k}\,G^n]\leq M$ and so the exponent of $[P,_{k}\,G^n]$ divides $f$, too. Passing to the quotient $G/[P,_{k}\,G^n]$ we can assume that $[P,_{k}\,G^n]=1$ and so $G^n\cap P\le Z_{k}(G^n)$. The group $G^n/G^n\cap P$  is a $q$-group and so it is nilpotent. Therefore $G^n$ is nilpotent and we deduce that $[G^n,_k\,G]$ is of bounded exponent because it is a nilpotent subgroup generated by $w$-values and $G\in X(f,w)$. We can now pass to the quotient $G/[G^n,_k\,G]$ and without loss of generality assume that $w(G)=1$. In view of Lemma \ref{gamma} the result follows.
\end{proof}

\begin{lem}\label{towerw} Let $w=[x^n,y_1,y_2,\dots,y_k]$ and let $G\in Y(e,w)$. If $T=P_1\cdots P_r$ is a tower of height $r$ in $G$, then every $p$-Sylow subgroup of $P_2\cdots P_r$  has $(e,w)$-bounded exponent.
\end{lem}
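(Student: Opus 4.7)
My plan is to adapt the proof of Lemma \ref{towerEngel}, using Lemma \ref{PQ} in place of the Engel-specific coprime action argument. The skeleton has three steps: (I) bound the exponent of each $P_i$ for $i\geq 2$; (II) deduce that any $p$-Sylow subgroup $P$ of $P_2\cdots P_r$ is generated by bounded-order elements; (III) bootstrap to a bound on the exponent of $P$ itself via Lemma \ref{rob}.

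For step (I) I would fix $i\geq 2$ and set $H_i=P_{i-1}P_i$. The tower axiom $[P_i,P_{i-1}]=P_i$ forces $p_i\neq p_{i-1}$, since otherwise $H_i$ would be a nilpotent $p_i$-group, incompatible with $P_i=[P_i,P_{i-1}]$. Because $Y(e,w)$ is closed under taking subgroups---every $w$-value of $H_i$ is a $w$-value of $G$, and any Sylow subgroup of $H_i$ embeds into a Sylow subgroup of $G$---we have $H_i\in Y(e,w)$. Lemma \ref{PQ} is then directly applicable and produces an $(e,w)$-bounded integer $c$ such that the exponent of $\gamma_{k+1}(H_i)$ divides $c$. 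Iterating the tower relation gives $P_i=[P_i,_{k}\,P_{i-1}]\leq\gamma_{k+1}(H_i)$, so $P_i$ has exponent dividing $c$, uniformly in $i$.

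For step (II), exactly as in the argument inside Lemma \ref{towerEngel}, any $p$-Sylow subgroup $P$ of $P_2\cdots P_r$ is generated by (conjugates of) those $P_j$ with $p_j=p$ and $j\geq 2$; this is a structural consequence of the tower and is precisely the fact invoked implicitly in Lemma \ref{towerEngel}. By (I), each such $P_j$ has exponent at most $c$, so $P$ is generated by elements of $(e,w)$-bounded order.

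For step (III), $P$ is a $p$-subgroup of $G$ and therefore lies inside some Sylow $p$-subgroup $P_G$ of $G$; the hypothesis $G\in Y(e,w)$ then forces $w(P)\leq w(P_G)$ to have exponent dividing $e$. The quotient $P/w(P)$ satisfies the law $w\equiv 1$ and is generated by elements of bounded order, so, since $w$ has property (bb) by Lemma \ref{gamma}, Lemma \ref{rob} gives that $P/w(P)$ has $(e,w)$-bounded exponent. Combining with the bound on $w(P)$ yields the desired $(e,w)$-bounded bound on the exponent of $P$. I expect step (I) to be the main content: once Lemma \ref{PQ} is in place and is shown to apply at every level of the tower, the remaining two steps are essentially routine mimicking of Lemma \ref{towerEngel}.
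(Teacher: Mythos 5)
Your proof is correct and takes essentially the same route as the paper: apply Lemma \ref{PQ} to each $P_{i-1}P_i$ to get bounded-order generators of the $P_i$, then conclude for a Sylow subgroup of $P_2\cdots P_r$ via the hypothesis $G\in Y(e,w)$ and Lemma \ref{rob}. The only (harmless) deviation is that you obtain $P_i\le\gamma_{k+1}(P_{i-1}P_i)$ directly by iterating $[P_i,P_{i-1}]=P_i$, whereas the paper reaches the same containment through Lemma \ref{coprimeaction}.
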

\begin{proof}
We will first show that $P_i$ is generated by elements of bounded orders for each $i=2,\dots,r$. As $P_i=[P_i,P_{i-1}]$ it follows from Lemma \ref{coprimeaction} that $P_i$ is generated by subgroups of the form $[P_i,b_1,\dots,b_k]$ with $b_1,\dots,b_k\in P_{i-1}$. As the group $P_iP_{i-1}$ satisfies the hypotheses of Lemma \ref{PQ}, it follows that $\gamma_{k+1}(P_{i-1}P_i)$ has $(e,w)$-bounded exponent and this proves that $P_i$ is generated by elements of bounded orders for each $i=2,\dots,r$. Now let $P$ be a $p$-Sylow subgroup of $P_2\cdots P_r$. Since $G\in Y(e,w)$, it follows
that the exponent of $w(P)$ divides $e$. Since $P$ is generated by elements of bounded orders, the result follows from Lemma \ref{rob}.
\end{proof}

\begin{lem}\label{Fit} Let $w$ be either the $n$th Engel word or the word $[x^n,y_1,y_2,\dots,y_k]$. The Fitting height of any soluble group in $Y(e,w)$ is $(e,w)$-bounded.
\end{lem}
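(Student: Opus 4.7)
The plan is to use the tower characterisation of Fitting height recalled just before Lemma~\ref{coprimeaction}: in a finite soluble group the Fitting height equals the maximum height of a tower. So I would let $G\in Y(e,w)$ be soluble of Fitting height $h$, fix a tower $T=P_1P_2\cdots P_h$ in $G$ realising this height, and reduce the lemma to bounding $h$.

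The first move is immediate. Applying Lemma~\ref{towerEngel} when $w$ is the $n$th Engel word, or Lemma~\ref{towerw} when $w=[x^n,y_1,\ldots,y_k]$, one gets that every Sylow subgroup of $P_2\cdots P_h$ has exponent dividing some $(e,w)$-bounded integer $f$. Since each $P_i$ with $i\ge 2$ is a $p_i$-subgroup of $P_2\cdots P_h$, it sits inside a Sylow $p_i$-subgroup and therefore has exponent dividing $f$.

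The remaining and main step is to bound $h$ given that $P_2,\ldots,P_h$ all have exponent at most $f$. For this I would appeal to a Hall--Higman type theorem: in any tower $Q_1Q_2\cdots Q_s$ of a finite soluble group in which each piece $Q_i$ with $i\ge 2$ has exponent dividing $f$, the length $s$ is bounded by a function of $f$ alone. The classical argument uses Hall--Higman's Theorem B (with the Kovacs correction at the prime $2$): an element $x\in Q_{i-1}$ of order $p_{i-1}^{a_{i-1}}\le f$ acts coprimely on $Q_i$, and Hall--Higman forces the minimal polynomial of $x$ on the chief factors of $Q_iQ_{i+1}\cdots Q_s$ to have a large prescribed degree, so that each additional tower layer consumes nontrivial structural room and only boundedly many layers can appear before the tower terminates.

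Combining the two steps yields $h\le N(f)$ for some function $N$, so the Fitting height of $G$ is $(e,w)$-bounded, as required. I expect the Hall--Higman step to be the main technical obstacle, since it must be made uniform across all primes occurring in the tower and is sensitive to the prime $2$; by contrast, reducing to the statement about exponents of the lower tower pieces is a direct invocation of the lemmas already established in this section.
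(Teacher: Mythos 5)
Your proposal is correct and follows essentially the same route as the paper: take a tower realising the Fitting height, invoke Lemma~\ref{towerEngel} or Lemma~\ref{towerw} to bound the exponents of the Sylow subgroups of $P_2\cdots P_h$, and finish with Hall--Higman. The paper's final step is slightly more economical -- it simply observes that $P_2\cdots P_h$ is a soluble group of $(e,w)$-bounded exponent and cites the Hall--Higman theorem that Fitting height is bounded in terms of the exponent, rather than re-running the minimal-polynomial argument layer by layer on the tower -- but this is the same underlying machinery.
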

\begin{proof} Let $G$ be a soluble group in $Y(e,w)$ and choose a tower 
$T=P_1\cdots P_r$ in $G$ of height precisely the Fitting height of $G$. Of course, it is enough to prove that the Fitting height of the subgroup $P_2\cdots P_r$ is $(e,w)$-bounded. By Lemmas \ref{towerEngel} and \ref{towerw} every $p$-Sylow subgroup of $P_2\cdots P_r$ has $(e,w)$-bounded exponent. Therefore $P_2\cdots P_r$ is a soluble group of $(e,w)$-bounded exponent. According to the Hall-Higman theory \cite{HH} the Fitting height of $P_2\cdots P_r$ is bounded in terms of the exponent. This completes the proof.
 \end{proof}

 We need another technical result concerning the $n$th Engel word.
 \begin{lem}\label{engel} Let $w$ be the $n$th Engel word and let $T$ be a metanilpotent group. If $M$ is the subgroup generated by all $w$-values contained in the Fitting subgroup of $T$, then $T/M$ is nilpotent.
 \end{lem}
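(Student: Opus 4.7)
The plan is to reduce the lemma to a structural claim about finite groups with a normal nilpotent subgroup of nilpotent quotient. Since $F=\mathrm{Fit}(T)$ is normal in $T$, for any $x\in F$ and $y\in T$ the Engel value $[x,_n y]$ lies in $F$, and hence in $M$. Passing to $\bar T=T/M$ and writing $\bar F=F/M$, we obtain the key identity $[\bar x,_n \bar y]=1$ for every $\bar x\in\bar F$ and every $\bar y\in\bar T$, while $\bar T/\bar F\cong T/F$ remains nilpotent.

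Next I would show that $\bar F\langle\bar y\rangle$ is nilpotent for every $\bar y\in\bar T$. Since $\bar F$ is normal in $\bar F\langle\bar y\rangle$, an arbitrary element there has the form $\bar x\bar y^k$, and the commutator identities yield $[\bar x\bar y^k,_n \bar y]=[\bar x,_n \bar y]^{\bar y^k}=1$. Thus $\bar y$ is a right $n$-Engel element of the finite group $\bar F\langle\bar y\rangle$, so by Baer's theorem it lies in the Fitting subgroup of that group. Combined with the fact that $\bar F$ itself is nilpotent and normal in $\bar F\langle\bar y\rangle$, this forces $\bar F\langle\bar y\rangle$ to be nilpotent. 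This is the step I expect to be the main obstacle: one has to convert the purely word-theoretic hypothesis into a usable Fitting-subgroup statement via Baer.

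Finally, I would prove and apply the following auxiliary claim: if $G$ is a finite group with a normal nilpotent subgroup $N$ such that $G/N$ is nilpotent and $N\langle y\rangle$ is nilpotent for every $y\in G$, then $G$ itself is nilpotent. For every prime $p$ and Sylow $p$-subgroup $P$ of $G$, $PN$ is normal in $G$ (being the preimage of a Sylow subgroup of the nilpotent quotient $G/N$), $N$ decomposes as $N_p\times N_{p'}$ with $N_{p'}$ characteristic in $N$ and hence normal in $G$, and $N_p\le P$ because $N_p$ is a normal $p$-subgroup. Applying the hypothesis to each $p$-element of $P$ shows that $P$ centralizes the coprime Hall factor $N_{p'}$, so $PN=P\times N_{p'}$ is nilpotent; $P$ is then characteristic in $PN$, hence $P\triangleleft G$. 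Doing this for each prime gives $G$ nilpotent, and applying the claim to $\bar T$ with $N=\bar F$ yields that $T/M$ is nilpotent.
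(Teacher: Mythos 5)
Your proof is correct, but it takes a longer route than the paper's, and the detour is caused by using the Engel hypothesis only on $F$. The paper's observation is that the condition need not be restricted to $F$ at all: for arbitrary $g,a\in T$, nilpotency of $T/F$ gives $[g,_r\,a]\in F$ for some $r$, and then $[g,_{r+n}\,a]=[[g,_r\,a],_n\,a]$ is a value of the $n$th Engel word lying in $F$ (as $F$ is normal), hence in $M$. Thus \emph{every} element of $T/M$ is a left Engel element, and a single application of Baer's theorem (Robinson, 12.3.3) to the whole group $T/M$ finishes the proof. Because you only record the Engel condition for $\bar x\in\bar F$, you are forced to apply Baer locally to each subgroup $\bar F\langle\bar y\rangle$ and then to supply a Sylow-theoretic gluing lemma ($N$ nilpotent normal, $G/N$ nilpotent, $N\langle y\rangle$ nilpotent for all $y$ implies $G$ nilpotent). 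That lemma and its proof are sound, and your commutator computation $[\bar x\bar y^k,_n\,\bar y]=[\bar x,_n\,\bar y]^{\bar y^k}$ is correct, so the whole argument goes through; it is simply more machinery than needed. Two small remarks: the condition $[\bar g,_n\,\bar y]=1$ for all $\bar g$ makes $\bar y$ a \emph{left} $n$-Engel element, not a right one, and it is the left Engel elements that Baer's theorem places in the Fitting subgroup of a finite group (right Engel elements land in the hypercentre), so you are invoking the correct theorem under the wrong name; and both your proof and the paper's implicitly use finiteness of $T$ (via Baer and, in your case, Sylow theory), which is harmless since the lemma is only applied to finite groups.
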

\begin{proof} Let $F$ be the Fitting subgroup of $T$ and let $g,a\in T$. As $T/F$ is nilpotent, $[g,_r\,a]\in F$ for some positive integer $r$. Then $[g,_{r+n}\,a]\in M$. Therefore every element of $T/M$ is left  Engel. It follows that $T/M$ is nilpotent (see for instance Proposition 12.3.3 of \cite{Rob}), as required.
 \end{proof}

\begin{prop}\label{solv} Let $w$ be the $n$th Engel word or the word $[x^n,y_1,y_2,\dots,y_k]$. Assume that $G$ is a soluble group in $Y(e,w)$. Then the exponent of $w(G)$ is $(e,w)$-bounded.
\end{prop}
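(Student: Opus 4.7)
The plan is to argue by strong induction on the Fitting height $h$ of $G$, which is $(e,w)$-bounded by Lemma~\ref{Fit}. To make the induction run smoothly I strengthen the conclusion: I claim that both $\exp(w(G))$ and $\exp(\gamma_\infty(G))$ are $(e,w)$-bounded. Strengthening in this way is essential, because at the inductive step I will need to apply the bound to $\gamma_\infty(F_{h-1})$, a natural characteristic subgroup of $G$ that is not of the form $w(H)$ for any $H$. The base case $h=1$ is immediate: a nilpotent $G \in Y(e,w)$ is the direct product of its Sylow subgroups $P_i$, so $w(G) = \prod_i w(P_i)$ has exponent dividing $e$, and $\gamma_\infty(G) = 1$.

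The metanilpotent base case $h=2$ is the crux and is handled differently for the two words. In the Engel case, I apply Lemma~\ref{engel} to $G$: the subgroup $M$ generated by all $w$-values of $G$ lying in $\mathrm{Fit}(G)$ is nilpotent (being contained in $\mathrm{Fit}(G)$) and generated by $w$-values, so Lemma~\ref{quotiens} yields $\exp(M) \le f$ bounded; since $\gamma_\infty(G) \le M$ and $G/M$ is nilpotent and lies in $Y(e,w)$, the $h=1$ case applied to $G/M$ gives $w(G)^e \le M$, so both bounds follow at once. For $w = [x^n, y_1, \dots, y_k]$ I combine Lemma~\ref{PQ} applied to each pair $F_p Q_q$ (with $F_p = O_p(G)$ and $Q_q$ a Sylow $q$-subgroup of $G$, $q \ne p$) with the coprime-action identity $[F_p, Q_q] = [F_p, {}_k Q_q]$ to conclude that $\exp([F_p, Q_q])$ is bounded; Lemma~\ref{coprimeaction} applied to the coprime action of a Hall $p'$-subgroup on $F_p$ then assembles these contributions into a bound on $\exp([F_p, H_p])$, and the description of $\gamma_\infty(G)$ in the metanilpotent setting as the normal closure of $\bigcup_{p \ne q}[G_p, G_q]$ produces the bound on $\exp(\gamma_\infty(G))$. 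The bound on $\exp(w(G))$ then follows from $w(G)^e \le \gamma_\infty(G)$, which holds because $G/\gamma_\infty(G)$ is nilpotent and still lies in $Y(e,w)$.

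The inductive step $h \ge 3$ reduces to the metanilpotent case. By the induction hypothesis applied to $F_{h-1}$, which is solvable of Fitting height at most $h-1$ and lies in $Y(e,w)$ (the class being subgroup-closed), $\exp(\gamma_\infty(F_{h-1})) \le N_{h-1}$. Setting $\bar G = G/\gamma_\infty(F_{h-1})$, the containment $\gamma_\infty(F_{h-1}) \le F_{h-2}$ (which holds because $F_{h-1}/F_{h-2}$ is nilpotent) ensures that $F_{h-1}/\gamma_\infty(F_{h-1})$ is a nilpotent normal subgroup of $\bar G$ with nilpotent quotient $\bar G/(F_{h-1}/\gamma_\infty(F_{h-1})) \cong G/F_{h-1}$; hence $\bar G$ has Fitting height at most $2$. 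The metanilpotent case applied to $\bar G \in Y(e,w)$ bounds both $\exp(w(\bar G))$ and $\exp(\gamma_\infty(\bar G))$, yielding $w(G)^{M_1} \le \gamma_\infty(F_{h-1})$ and $\gamma_\infty(G)^{M_2} \le \gamma_\infty(F_{h-1})$ for bounded $M_1, M_2$; combining with the bound on $\exp(\gamma_\infty(F_{h-1}))$ gives the required bounds on $\exp(w(G))$ and $\exp(\gamma_\infty(G))$.

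The hard part will be the metanilpotent base case for $w = [x^n, y_1, \dots, y_k]$: unlike the Engel case, where Lemma~\ref{engel} supplies a clean structural statement producing $\gamma_\infty(G)$ bounded inside a nilpotent subgroup generated by $w$-values, here one must orchestrate prime-by-prime applications of Lemma~\ref{PQ} together with the coprime-action machinery to control the $p$-Sylow subgroups of $\gamma_\infty(G)$. Once this metanilpotent step is in place, the remainder of the argument is a clean reduction via the Fitting series.
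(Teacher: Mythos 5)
Your overall architecture (induction on the Fitting height bounded by Lemma~\ref{Fit}, strengthening the statement to also bound $\exp(\gamma_\infty(G))$, and reducing the step $h\ge 3$ to the metanilpotent case by quotienting by $\gamma_\infty(F_{h-1})$) is sound, and your Engel case is essentially the paper's argument via Lemma~\ref{engel}. But the metanilpotent case for $w=[x^n,y_1,\dots,y_k]$ --- which you yourself flag as ``the hard part'' --- is not proved, and the sketch you give for it contains a genuine error. Lemma~\ref{coprimeaction} only tells you that $[F_p,H_p]$ is \emph{generated by} subgroups of the form $[F_p,b_1,\dots,b_k]$, each of bounded exponent; being generated by (even normal) subgroups of bounded exponent does not bound the exponent of the group. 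For instance $D_8$ is the product of two normal elementary abelian subgroups but has exponent $4$. So the ``assembly'' of the prime-by-prime bounds from Lemma~\ref{PQ} into a bound on $\exp([F_p,H_p])$ does not follow as stated; one would need an extra ingredient (e.g.\ observing that $[F_p,H_p]$ is generated by elements of bounded order, that $w(F_p)$ has exponent dividing $e$, and invoking Lemma~\ref{rob} on $[F_p,H_p]w(F_p)/w(F_p)$), and similarly the identification of $\gamma_\infty(G)$ with a normal closure of the $[G_p,G_q]$ needs care, since normal closures reintroduce the same product-of-bounded-exponent-subgroups problem.

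The paper sidesteps this entirely with a different induction for this word: setting $M=w_G(F)$ and $K=G^n$, one has $[\bar K,\bar F,{}_{k-1}\,\bar G]=1$ in $\bar G=G/M$ because $F$ contains no nontrivial $w$-values modulo $M$, whence $\bar F\cap\bar K\le Z_k(\bar K)$ and $\bar K$ has strictly smaller Fitting height. The inductive hypothesis is applied to the \emph{subgroup} $\bar K$, and after quotienting by $w(K)M$ and then by $w_G(K^n)$ one arrives at $G^{n^2}\le Z_k(G)$, at which point Lemma~\ref{mann+} bounds $\exp(\gamma_{k+1}(G))\supseteq$ (the exponent of) $w(G)$. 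If you want to keep your metanilpotent-reduction framework, you must either import this $K=G^n$ argument into the $h=2$ case or supply the missing exponent-assembly argument; as written, the central case of the proposition is open.
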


\begin{proof}
The proof is by induction on the Fitting height $h$ of $G,$ which is $(e,w)$-bounded by Lemma \ref{Fit}.
 If $h=1$, then $G$ is nilpotent and so the exponent of $w(G)$ divides $e$. 

Now assume that $h>1$ and let $F$ be the Fitting subgroup of $G$. The subgroup $M=w_{G}(F)$ is nilpotent, so it has $(e,w)$-bounded  exponent by Lemma \ref{quotiens}.

Consider the case in which $w$ is the $n$th Engel word. Let $T/F$ be the Fitting subgroup of $G/F$. Then $T$ is metanilpotent and by Lemma \ref{engel} $T/M$ is nilpotent. Therefore the Fitting height of $G/M$ is smaller  than that of $G$. By induction $w(G/M)$ has $(e,w)$-bounded exponent. Hence, the exponent of $w(G)$ is $(e,w)$-bounded as well.

Now let  $w=[x^n,y_1,y_2,\dots,y_k]$. Put $K=G^n$. In the quotient $\bar G=G/M$ we have $[\bar K,\bar F,_{k-1}\,\bar G]=1$, whence $\bar F\cap\bar K\le Z_k(\bar K)$. This implies that $\bar K$ has smaller Fitting height than $G$. By induction $w(\bar K)$ has $(e,w)$-bounded exponent. Since the exponent of $M$ is bounded as well, we deduce that the exponent of $w(K)M$ is bounded. Passing to the quotient over $w(K)M$, we may assume that $w(K)=1$. It follows that $K^n$ is nilpotent. By Lemma \ref{quotiens} $G\in X(f,w)$ for some $(e,w)$-bounded integer $f$. Therefore the subgroup $w_G(K^n)$ has exponent dividing $f$. Passing to the quotient over $w_G(K^n)$, we can assume that $[K^n,_k\,G]=1$. This implies that $K^n\le Z_k(G)$, that is $G^{n^2}\le Z_k(G)$. Thus $G/Z_k(G)$ has bounded exponent. By Lemma \ref{mann+} $\gamma_{k+1}(G)$ has bounded exponent, and as $w(G)\le\gamma_{k+1}(G)$ the result follows.
\end{proof}

Following the terminology used by Hall and Highman in \cite{HH}, we say that a group $G$ is monolithic if it has a unique minimal normal subgroup which is
a non-abelian simple group. Nowadays very often such groups are called almost simple.

\begin{lem}\label{L} Let $L$ be a residually monolithic group satisfying a law $u\equiv 1$. Then $L$ has $u$-bounded exponent.
 \end{lem}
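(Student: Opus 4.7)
The plan is to reduce the assertion to the case of a single monolithic group and then invoke the theorem, due to Jones, that any non-trivial variety of groups contains only finitely many non-abelian finite simple groups (a result depending on the classification of finite simple groups, which is already used elsewhere in the paper). Since $L$ is residually monolithic, there is a family $\{N_\alpha\}$ of normal subgroups of $L$ with $\bigcap_\alpha N_\alpha = 1$ such that each quotient $L/N_\alpha$ is monolithic, and each such quotient still satisfies the law $u\equiv 1$. It therefore suffices to produce a $u$-bounded integer $B$ which bounds the exponent of every monolithic group satisfying $u\equiv 1$: once this is achieved, $x^B\in N_\alpha$ for all $\alpha$, hence $x^B=1$, so $L$ has exponent dividing $B$.

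Let $M$ be a monolithic group satisfying $u\equiv 1$, with unique minimal normal subgroup $S$, which by definition is a non-abelian simple group. Since $S\le M$, the group $S$ also satisfies the law $u\equiv 1$. By Jones' theorem, the class of finite non-abelian simple groups satisfying $u\equiv 1$ is finite, so $|S|$ is $u$-bounded. Moreover, $C_M(S)\triangleleft M$ and $C_M(S)\cap S=Z(S)=1$; if $C_M(S)$ were non-trivial it would contain a minimal normal subgroup of $M$, forcing $S\le C_M(S)$ and contradicting the non-abelianness of $S$. Hence $C_M(S)=1$ and $M$ embeds into $\mathrm{Aut}(S)$. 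In particular $|M|$, and a fortiori the exponent of $M$, is $u$-bounded.

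Combining the two steps, the common bound $B$ on the exponents of all such $M$ serves as a $u$-bounded bound on the exponent of $L$. The only non-formal ingredient is the appeal to Jones' theorem; everything else is routine manipulation with subdirect products and almost simple groups. The argument presupposes that the monolithic socles in question are finite, which is consistent with the way the lemma is applied later in the paper.
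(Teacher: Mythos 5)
Your proposal is correct and follows essentially the same route as the paper: reduce to the monolithic quotients, embed each monolithic group into $\mathrm{Aut}(S)$ via $C_M(S)=1$, and invoke Jones' theorem to bound the finitely many possible simple socles $S$, hence the exponent of $\mathrm{Aut}(S)$. The paper simply states the embedding into $\mathrm{Aut}(S)$ as an observation, whereas you supply the standard centralizer argument; otherwise the two proofs coincide.
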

 
\begin{proof} Observe that every monolithic group is isomorphic to a subgroup of $\mathrm{Aut}(S)$, where $S$ is the non-abelian finite simple group isomorphic to the unique minimal normal subgroup of the group. A result of Jones \cite{J} says that any infinite family of finite simple groups generates the variety of all groups, so there are only finitely many finite simple groups satisfying the law $u\equiv 1$. This implies the existence of a bound (depending only on $u$) for the exponent of $\mathrm{Aut}(S)$, where $S$ ranges through such simple groups. Thus, $L$ has bounded exponent.
\end{proof}

Let $G$ be a finite group and $r$ a positive integer. As in \cite{S2} we will associate with $G$ a triple of numerical parameters $n_r(G)=(\lambda,\nu,\mu)$ where the parameters $\lambda,\mu,\nu$ are defined as follows.
 
We recall that the  $n$th derived word $\delta_n$  is defined recursively by: 
$\delta_1(x_1,x_2)=[x_1,x_2]$ and $\delta_i(x_1,\dots,x_{2^i})=[\delta_{i-1}(x_1,\dots,x_{2^{i-1}}),\delta_{i-1}(x_{2^{i-1}+1},\dots,x_{2^i})]$ for all $i>1$. Let $X_n(G)$ be the set of $\delta_n$-values. If $G$ is of odd order, we set $\lambda=\mu=\nu=0$. Suppose that $G$ is of even order and choose a $2$-Sylow subgroup $P$ in $G$. If the derived length $\mathrm{dl}(P)$ of $P$ is at most $r+1$ we define $\lambda=\mathrm{dl}(P)-1$. Put $\mu=2$ if $X_{\lambda}(P)$ contains elements of order greater than two and $\mu=1$ otherwise. We let $\nu=\mu$ if $X_{\lambda}(P)\not\subseteq Z(P)$ and $\nu=0$ if $X_{\lambda}(P)\subseteq Z(P)$.
 
If the derived length of $P$ is at least $r+2$ we define $\lambda=r$. Then $\mu$ will denote the number with the property that $2^{\mu}$ is the maximum of orders of elements in $X_{r}(P)$. Finally, let $2^{\nu}$ be the maximum of orders of commutators $[a,b]$, where $b\in P$ and $a=c^{\mu-1}$ for some element $c$ of maximal order in $X_{r}(P)$.

We remark that our definition of $\mu$ is slightly different from that given in \cite{S2}. We believe that the definition in \cite{S2} was somewhat inaccurate while the present definition corrects the error. This minor correction does not require any changes in the subsequent arguments. In particular, the proof of Proposition \ref{G/L} below remains unaffected.

The set of all possible triples $n_r(G)$ is naturally endowed with the lexicographic order. Moreover, if $N$ is a normal subgroup of $G$, then $n_r(G/N) \le n_r(G)$. As we will use induction on $n_r(G)$, we need to show that for some suitable $r$, depending only on $e$ and $w$, there are only finitely many triples $n_r(G)$ associated with groups $G$ in $Y(e,w)$.

\begin{lem}\label{nr} Let $w$ be a word with the property (bb) and let $e\ge 1$. 
There exist $(e,w)$-bounded numbers $r,\lambda_0,\mu_0,\nu_0$ such that $n_r(G)\le(\lambda_0,\mu_0,\nu_0)$ for every group $G\in Y(e,w)$.
\end{lem}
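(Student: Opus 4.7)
If $G$ has odd order then $n_r(G)=(0,0,0)$ by definition and there is nothing to prove. So assume $|G|$ is even and let $P$ be a $2$-Sylow subgroup of $G$. The strategy is \emph{not} to bound $\mathrm{dl}(P)$ directly (which would seem to require soluble-length statements for finite $2$-groups of bounded exponent that do not follow from property (bb) alone), but rather to choose an $(e,w)$-bounded $r$ so that $P^{(r)}$ is forced to lie inside a normal subgroup of $P$ whose elements all have $(e,w)$-bounded order. This will be enough to bound both $\mu$ and $\nu$ whether we land in the first or the second clause of the definition of $n_r$.

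The starting point is that $w(P)$ has exponent dividing $e$, since $G\in Y(e,w)$, while $P/w(P)$ satisfies the law $w\equiv 1$. Property (bb) then supplies a normal subgroup $A$ of $P$ with $w(P)\le A$, with $A/w(P)$ of exponent dividing $b$, and with $P/A$ nilpotent of class at most $b$, where $b=b(w)$. Every $a\in A$ satisfies $a^b\in w(P)$ and hence $a^{be}=1$, so $A$ has exponent dividing the $(e,w)$-bounded integer $be$. The nilpotent quotient $P/A$, being of class at most $b$, has derived length at most $r:=\lceil\log_2(b+1)\rceil$, which is $w$-bounded, and for this value of $r$ we have $P^{(r)}\le A$.

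It remains to check that $n_r(G)=(\lambda,\nu,\mu)$ is bounded componentwise. If $\mathrm{dl}(P)\le r+1$, then by the first clause $\lambda=\mathrm{dl}(P)-1\le r$ and $\mu,\nu\in\{0,1,2\}$. If $\mathrm{dl}(P)\ge r+2$, then $\lambda=r$ by the second clause, and since $X_r(P)\subseteq P^{(r)}\le A$, every $\delta_r$-value has order dividing $be$, so $2^\mu\le be$. For $\nu$, pick $c$ of maximal order in $X_r(P)$; then $a=c^{\mu-1}\in A$, and for every $y\in P$ we have $[a,y]\in[A,P]\le A$ because $A$ is normal in $P$. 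Hence $|[a,y]|$ divides $be$ and $2^\nu\le be$. All three parameters are therefore bounded by $(e,w)$-bounded quantities.

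The principal obstacle I anticipated is that property (bb) gives no bound on $\mathrm{dl}(P)$ itself, nor on the derived length of the bounded-exponent factor $A/w(P)$, both of which can a priori grow without bound for finite $2$-groups. This is sidestepped by using the \emph{exponent} of $A$ rather than its derived length as the key bounded invariant, and by exploiting the normality of $A$ in $P$ to contain the commutators $[a,y]$ appearing in the definition of $\nu$ inside $A$.
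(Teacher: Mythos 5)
Your proof is correct and follows essentially the same route as the paper: apply property (bb) to $P/w(P)$ to produce an $(e,w)$-bounded $r$ with $P^{(r)}$ of $(e,w)$-bounded exponent, then read off bounds on $\lambda$, $\mu$, $\nu$ from the definition of $n_r$. Your write-up is in fact slightly more explicit than the paper's in verifying the two clauses of the definition (in particular the normality argument containing $[a,y]$ in a bounded-exponent subgroup), but the underlying idea is identical.
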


\begin{proof} Choose $G\in Y(e,w)$. Let $P$ be a $2$-Sylow subgroup of $G$. We will show that for a suitable $(e,w)$-bounded $r$ the exponent of $P^{(r)}$ is  $(e,w)$-bounded, where $P^{(r)}$ is the $r$th term of the derived series of $P$. By the hypothesis $w(P)$ has exponent dividing $e$. Since $w$ has the property (bb), it follows that $P/w(P)$ is an extension of a group of bounded exponent by a group of bounded nilpotency class. Therefore there exists an $(e,w)$-bounded number $r$ such  that $P^{(r)}$ is of bounded exponent. If $n_r(G)=(\lambda,\mu,\nu)$, the definitions imply that $\lambda\le r$ and $\mu,\nu$ are bounded by the exponent of $P^{(r)}$.
\end{proof}

For the proof of the next proposition see \cite{S2}.
\begin{prop}\label{G/L} Let $r\ge 1$ and let $G$ be a group of even order such that $G$ has no nontrivial normal soluble subgroups. Then $G$ possesses a normal subgroup $L$ such that $L$ is residually monolithic and $n_r(G/L)<n_r(G)$.
\end{prop}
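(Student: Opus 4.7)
The plan is to construct $L$ from the socle of $G$. Since $G$ has no nontrivial normal soluble subgroups, the socle is a direct product $\mathrm{Soc}(G)=T_1\times\cdots\times T_n$ of non-abelian finite simple groups, and $C_G(\mathrm{Soc}(G))=1$. Each $T_i$ is monolithic (being simple), so $\mathrm{Soc}(G)$ is a direct product of monolithic groups and is therefore residually monolithic: the kernels of the projections onto the factors are normal, have monolithic quotient, and intersect trivially. This makes $\mathrm{Soc}(G)$ the natural first candidate for $L$. More generally, any normal subgroup of $G$ contained in $\bigcap_i N_G(T_i)$ embeds via the conjugation action into $\prod_i\mathrm{Aut}(T_i)$, which is a direct product of monolithic groups; any such subgroup containing $\mathrm{Soc}(G)$ will again be residually monolithic.

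The core of the argument is to show that $n_r(G/L)<n_r(G)$, possibly after enlarging $L$ beyond $\mathrm{Soc}(G)$. Let $P$ be a Sylow $2$-subgroup of $G$. Since every $T_i$ has even order by Feit--Thompson, the intersection $P_0=P\cap\mathrm{Soc}(G)$ is a nontrivial Sylow $2$-subgroup of $\mathrm{Soc}(G)$, and the Sylow $2$-subgroup of $G/\mathrm{Soc}(G)$ is isomorphic to $P/P_0$. I would then examine the three entries of $n_r(P)=(\lambda,\mu,\nu)$ in turn and, in each case, produce a normal, residually monolithic $L\supseteq\mathrm{Soc}(G)$ such that the relevant entry strictly decreases upon passing to $G/L$. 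The enlargement recipe above guarantees that residual monolithicity is preserved throughout this process.

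The structural heart of the proof is a case analysis according to whether the drop occurs in $\lambda$, in $\mu$, or in $\nu$. One must exhibit, for the current value of the triple, witnesses in $P$ -- that is, a $\delta_{\lambda}$-value of maximal order defining $\mu$, respectively a commutator $[c^{\mu-1},b]$ of maximal order defining $\nu$ -- lying in the Sylow $2$-part of the enlarged $L$, so that these witnesses become trivial in $G/L$. This requires detailed information on the structure of Sylow $2$-subgroups of non-abelian finite simple groups (available through CFSG) and on how $\delta_r$-values in $P$ interact with the simple factors of the socle.

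The main obstacle is precisely this last case analysis: ensuring that one can always find an enlargement $L$ of $\mathrm{Soc}(G)$ in which the chosen witness is absorbed, so that the corresponding coordinate of $n_r$ strictly drops. The argument is technical but already carried out in \cite{S2}; in a full write-up I would invoke that reference verbatim, noting as in the remark above that the slightly corrected definition of $\mu$ does not affect the proof given there.
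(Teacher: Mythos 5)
Your proposal is correct and matches the paper's own treatment: the paper gives no proof of this proposition but simply refers to \cite{S2}, which is exactly where your argument also ultimately lands for the crucial case analysis showing that $n_r$ strictly drops. Your preliminary setup (socle decomposition, $C_G(\mathrm{Soc}(G))=1$, embedding of normal subgroups normalizing the simple factors into $\prod_i\mathrm{Aut}(T_i)$) is consistent with the construction of the residually monolithic subgroup $L$ used elsewhere in the paper (cf.\ Lemma~\ref{prel}).
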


An automorphism $x$ of a group $G$ is called a nil-automorphism if for every $g\in G$ there exists an integer $n=n(g)$ such that $[g,_nx]=1$, where the commutator is taken in the holomorph of $G$. We will  need the following result, which is an immediate consequence of \cite[Theorem A]{CP}.
\begin{prop}\label{unip} Any group of nil-automorphisms of a finite group is nilpotent.
\end{prop}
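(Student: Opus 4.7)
The plan is to work inside the holomorph $H = G \rtimes A$. Since $G$ is finite, so is $\mathrm{Aut}(G)$, and therefore $A$ and $H$ are both finite. I aim to show that $A \le \mathrm{Fit}(H)$; since the Fitting subgroup of a finite group is nilpotent, this would immediately give that $A$ is nilpotent.

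The crucial input is \cite[Theorem A]{CP}. In its specialisation to the finite-group setting, it says that every nil-automorphism of a finite group is a left Engel element of the associated holomorph; equivalently, each such automorphism already lies in the Fitting subgroup. Granting this, Baer's classical theorem (which identifies, in a finite group, the set of left Engel elements with the Fitting subgroup) places each $x \in A$ in $\mathrm{Fit}(H)$, and so $A = \langle x : x \in A\rangle \le \mathrm{Fit}(H)$, as desired.

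The only non-trivial point is the passage from the nil-automorphism condition, which a priori controls only the commutators $[g,{}_n x]$ with $g \in G$, to the full left Engel condition in $H$, which would also require handling $[a,{}_n x]$ for $a \in A$. This bridging step is exactly what \cite[Theorem A]{CP} supplies, and is the reason why the authors describe the proposition as an ``immediate consequence''. Once that theorem is invoked, nothing further beyond standard facts on $\mathrm{Fit}(H)$ is needed.
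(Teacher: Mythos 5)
The paper offers no argument for this proposition at all: it is stated as an ``immediate consequence'' of \cite[Theorem A]{CP} and simply cited, so the only question is whether your reconstruction is an actual proof. It is not, because the ``bridging step'' you delegate to \cite[Theorem A]{CP} is not a bridge but the entire content of the theorem. The definition of a nil-automorphism controls only the commutators $[g,{}_n\,x]$ with $g\in G$. For $x$ to be a left Engel element of $H=GA$ (the semidirect product of $G$ by $A$) one also needs $[a,{}_n\,x]=1$ for every $a\in A$: applying the retraction $H\to A$ to $[ga,{}_n\,x]$ shows its $A$-component is $[a,{}_n\,x]$ computed in $A$. By the very Baer/Zorn circle of ideas you invoke, ``every $x\in A$ is a left Engel element of $A$'' is, for finite $A$, equivalent to ``$A$ is nilpotent'' --- the statement to be proved. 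So your intermediate claim is at least as strong as the proposition, and citing \cite{CP} for it (a theorem not stated in terms of Engel elements of the holomorph) makes the argument circular; the Baer step afterwards is then redundant.

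Worse, if ``the associated holomorph'' is read literally as $G\,\mathrm{Aut}(G)$, your intermediate claim is false. Take $G=C_2^3$, so that $\mathrm{Aut}(G)=GL(3,2)$ is simple and the holomorph is the affine group $\mathrm{AGL}(3,2)$, whose Fitting subgroup is exactly the translation subgroup $G$. Since $G$ is abelian, $[g,{}_n\,x]=g^{(x-1)^n}$ in additive notation, so any nontrivial transvection $x$ is a nil-automorphism (with $n=2$); yet $x\notin\mathrm{Fit}(\mathrm{AGL}(3,2))$, so by Baer's theorem $x$ is not a left Engel element of the holomorph. What \emph{is} true and elementary (via $[gx^i,x]=[g,x]^{x^i}\in G$) is that a single nil-automorphism $x$ is left Engel in $G\langle x\rangle$ and hence lies in $\mathrm{Fit}(G\langle x\rangle)$; the hard content of \cite[Theorem A]{CP} is precisely the passage from this one-element statement to the nilpotency of a whole group $A$ of nil-automorphisms, and that passage is what your proposal omits. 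The correct course here is simply to cite \cite[Theorem A]{CP} for the statement it actually proves, as the paper does.
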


We are now in a position to complete the proof of Theorem A.

\begin{proof}
[Proof of Theorem A]
 We will prove a formally stronger statement that the exponent of $w(G)$ is $(e,w)$-bounded for any $G\in Y(e,w)$. This will be sufficient for our purposes as we know that  $X(e,w)\subseteq Y(e,w)$. Throughout, we use the fact that any quotient of a group in $Y(e,w)$ belongs to $Y(e,w)$. 

Let $r$ be as in Lemma \ref{nr}. As $r$ and $n_r(G)$ are $(e,w)$-bounded, we can use induction on $n_r(G)$. If $n_r(G)=(0,0,0)$, then $G$ has odd order and so by the Feit-Thompson theorem \cite{FT} $G$ is soluble. In this case the result holds by Proposition \ref{solv}. So we may assume that $n_r(G)>(0,0,0)$.

First suppose that $G$ has no nontrivial normal soluble  subgroups. By Proposition \ref{G/L} there exists a normal subgroup $L$ in $G$ such that $L$ is residually monolithic and $n_r(G/L)<n_r(G)$. By induction the exponent of $w(G)L/L$ is $(e,w)$-bounded. 
By Lemma \ref{L}, applied with $u=w^e$, the exponent of $L$ is also $(e,w)$-bounded so the result follows.

Now let us drop the assumption that $G$ has no nontrivial normal   soluble  subgroups and let $S$ be the soluble radical of $G$. Proposition \ref{solv} shows that the exponent of $w(S)$ is bounded. Thus, we pass to the quotient $G/w(S)$ and without loss of generality assume that $w(S)=1$. As $w$ has the property (bb), $S$ has a characteristic subgroup $N$ of bounded exponent such that $S/N$ is nilpotent of bounded class. Passing to the quotient $G/N$ we may assume that $S$ is nilpotent of bounded class. Therefore $w_{G}(S)$ is a nilpotent subgroup of $G$ generated by $w$-values. By Lemma \ref{quotiens} $w_{G}(S)$ has $(e,w)$-bounded exponent. Passing to the quotient $G/w_{G}(S)$ we may assume that  $w_{G}(S)=1$. 

Suppose that $w$ is the $n$th Engel word and let $a\in G$. Then $[S,_n a]\le w_{G}(S)=1$. Thus, every element of $G$ induces a nil-automorphism of $S$ and so by Proposition \ref{unip} $G/C_G(S)$ is nilpotent. Let $H=SC_G(S)$. As $G/S$ has no nontrivial normal soluble subgroups, the same holds for $H/S$. We have $n_r(H/S)\le n_r(H)\le n_r(G)$. Since we know that the theorem holds for groups  without nontrivial  normal soluble subgroups, we conclude that $w(H)S/S$ is of bounded exponent. As $S$ is nilpotent of bounded class we deduce that $S\le Z_c(H)$ for some bounded integer $c$, whence $w(H)/Z_c(w(H))$ has bounded exponent. Lemma \ref{mann+} applied to the group $w(H)$ tells us that $\gamma_{c+1}(w(H))$ has bounded exponent. We pass to the quotient $G/\gamma_{c+1}(w(H))$ and without loss of generality assume that $\gamma_{c+1}(w(H))=1$. Now $w(H)$ is a nilpotent subgroup of $G$ generated by $w$-values. By Lemma \ref{quotiens} $w(H)$ has $(e,w)$-bounded exponent and we may assume that $w(H)=1$. In this case $H$ is an $n$-Engel group. By Zorn's theorem \cite{Z} $H$ is nilpotent. Moreover $G/H$ is also nilpotent because $C_G(S)\le H$ and so $G$ is metanilpotent. Now the result follows from Proposition \ref{solv}.

From now on we assume that $w=[x^n,y_1,y_2,\dots,y_k]$. Since $G/S$ has no nontrivial soluble normal subgroups and $n_r(G/S)\le n_r(G)$, it follows that $w(G)S/S$ has bounded exponent. Put $K=G^n$. Recall that $S$ contains no nontrivial $w$-values. Therefore $[K,S,_{k-1}\,G]=1$ and so $S\cap K\le Z_k(K)$. Combining the fact that $w(K)S/S$ has bounded exponent with Lemma \ref{mann+} we deduce that $\gamma_{k+1}(w(K))$ has bounded exponent. Passing to the quotient over $\gamma_{k+1}(w(K))$ we may assume that $\gamma_{k+1}(w(K))=1$. This means that $w(K)$ is nilpotent, in which case it has bounded exponent since $G \in Y(e,w)\subseteq X(f,w)$ for a bounded integer $f$.
Again, we may assume that $w(K)=1$. Since $w(G)\le K$, we have $w(w(G))=1$ and now the conclusion is immediate from Lemma \ref{rob}.
\end{proof}

\section{Theorem B}

\begin{lem}\label{we} Let $G\in X(k,e,w)$. If $N$ is a normal subgroup of $G$ which contains no nontrivial $w_e$-values of $G$, then $[N,w(G)]=1$.
\end{lem}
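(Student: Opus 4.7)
The plan is to show $[n, g] = 1$ for every $n \in N$ and every $w$-value $g$; since $w(G)$ is generated by $w$-values, this yields $[N, w(G)] = 1$. I treat the case in which $w$ is a commutator word, the non-commutator version of the conjecture being trivial.

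Fix $n$ and $g$, and let $H$ be the smallest $n$-invariant subgroup of $G$ containing $g$; it is generated by the $w$-values $g^{n^i}$. Since $[G, N] \le N$, we have $[H, n] \le H \cap N$. From $g^{n^{i+1}} = g^{n^i}[g^{n^i}, n]$ one sees that modulo $[H, n]$ all generators of $H$ coincide, so $H/[H, n]$ is cyclic, generated by the image $\bar g$ of $g$; let $d$ be its order.

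Since $w$ is a commutator word, $g$ is a $w_k$-value (pad with copies of $w(1, \ldots, 1) = 1$), so $\langle g \rangle$ is a nilpotent subgroup generated by $w_k$-values, and $G \in X(k, e, w)$ gives $g^e = 1$; hence $d \mid e$. Two applications of the $N$-hypothesis finish the proof. Firstly, $g^d \in [H, n] \le N$ is a product of $d \le e$ copies of the $w$-value $g$, hence --- after padding with copies of $w(1, \ldots, 1) = 1$ --- a $w_e$-value; so $g^d = 1$, and therefore $g$ and $g^n$ both have order exactly $d$. Secondly, the element $P := g \cdot (g^n)^{d-1}$ is a product of $d$ $w$-values, hence again a $w_e$-value after padding; since $\bar g$ and $\overline{g^n}$ coincide in $H/[H, n]$, the image of $P$ there is $\bar g^d = 1$, so $P \in [H, n] \le N$ and the hypothesis forces $P = 1$. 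Using $(g^n)^d = 1$ we have $(g^n)^{d-1} = (g^n)^{-1}$, so $P = 1$ collapses to $g = g^n$.

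The main technical point is the choice of $P$: it has length $d \le e$ as a product of $w$-values (promotable to a $w_e$-value by padding with $w(1,\ldots,1) = 1$), and its image in the cyclic quotient $H/[H,n]$ is $\bar g^d = 1$. The containment $[H, n] \le H \cap N$, the bound $d \le e$, and the padding trick promoting $w_d$-values to $w_e$-values are all routine.
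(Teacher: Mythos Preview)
Your proof is correct, but it is considerably more elaborate than necessary. The paper's argument is a single line: for $x\in N$ and a $w$-value $y$ one has $[y,x]=y^{-1}y^{x}=y^{e-1}y^{x}$ (using $y^{e}=1$), which is visibly a product of $e$ $w$-values, hence a $w_e$-value lying in $N$; so $[y,x]=1$. Your construction of $H=\langle g^{n^{i}}\rangle$, the quotient $H/[H,n]$, its cyclic order $d$, the preliminary conclusion $g^{d}=1$, and the auxiliary element $P=g(g^{n})^{d-1}$ all recover exactly this conclusion, but through a detour: once you know $(g^{n})^{e}=1$ you could have taken $P=g(g^{n})^{e-1}=g(g^{n})^{-1}$ directly, without ever introducing $d$ or the step $g^{d}=1$, and that is essentially the paper's computation. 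Two minor remarks: the restriction to commutator words is unnecessary, since $w(1,\dots,1)=1$ holds for \emph{every} word and that is all the padding trick needs; and the promotion of a $w$-value to a $w_k$-value (used to get $g^{e}=1$) likewise requires no hypothesis on $w$.
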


\begin{proof} Let $x\in N$ and let $y$ be a $w$-value. As $G\in X(k,e,w)$, the order of $y$ divides $e$. Write $[y,x]=y^{-1}y^x=y^{e-1}y^x$. We see that $[y,x]$ is a $w_e$-value since $y^{e-1}$ is a $w_{e-1}$-value and $y^x$ is a $w$-value. By the hypothesis $N$ contains no nontrivial $w_e$-values so $[y,x]=1$ and the result follows.
\end{proof}

The following proposition is immediate from Theorem 2 in \cite{Se}.

\begin{prop}\label{Seg} There exists a function $s(d)$ such that if $G$ is a finite soluble group generated by $g_1,\dots,g_d$, then every element of the derived group is equal to a product of $s(d)$ commutators of the form $[x,g]$, where $x\in G$ and $g\in\{g_1,\dots,g_d\}$.
\end{prop}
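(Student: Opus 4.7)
The plan is to invoke Theorem 2 of \cite{Se} essentially directly. That theorem of Segal provides, for a finite soluble group $G$ generated by $g_1,\dots,g_d$ and a normal subgroup $N$ of $G$, an explicit $d$-bounded bound on the number of commutators of the form $[x,g_i]$ (with $x\in N$, $g_i$ a generator) needed to express an arbitrary element of $[N,G]$. Specializing to $N=G$ gives exactly the statement of Proposition \ref{Seg}, since $[G,G]=G'$.

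If one wished to reconstruct the argument rather than merely quote it, the natural strategy is induction on the Fitting height $h$ of $G$. In the base case $h=1$ the group $G$ is nilpotent, and the fact that the derived subgroup is generated (as a normal subgroup) by $\{[g_i,g_j]\}$ combined with standard commutator identities such as $[xy,g]=[x,g]^y[y,g]$ allows one to rewrite any product of commutators as a product of a bounded number of commutators of the desired form; here the bound depends on $d$ and the nilpotency class, but the class can be absorbed via the Hall--Higman type reductions customary in this circle of ideas. For the inductive step, let $F=\mathrm{Fit}(G)$; applying the inductive hypothesis to $G/F$ (whose Fitting height is strictly smaller) writes any element of $G'$ modulo $G'\cap F$ as a bounded product of commutators $[x,g_i]F$, which we lift to commutators $[\tilde{x},g_i]$ in $G$; the remaining correction lies in $G'\cap F$, and is handled by exploiting that $F$ is nilpotent, normal, and acted on by the generators $g_1,\dots,g_d$.

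The main obstacle is the inductive step at the Fitting subgroup. Lifting commutators from the quotient introduces arbitrary elements of $F$, and one must show that these correction terms can themselves be expressed as a bounded product of commutators $[x,g_i]$ using the action of the generators on $F$. This is precisely the point where one appeals to the fine commutator width estimates for nilpotent groups developed by Nikolov and Segal, which lie at the heart of \cite{Se}. Once this estimate is in place, the induction closes with a bound $s(d)$ depending only on $d$, as required.
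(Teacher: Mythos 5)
Your proposal matches the paper exactly: the paper gives no proof of Proposition~\ref{Seg} beyond observing that it is immediate from Theorem~2 of \cite{Se}, which is precisely your first paragraph (the sketched reconstruction via induction on Fitting height is not needed and is not carried out in the paper either). So the approach is the same and the citation does all the work.
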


The next proposition, whose proof is based on techniques developed in \cite{FGG}, is taken from \cite{S2}.

\begin{prop}\label{Fla} Let $G$ be a finite group and $x\in G$. Suppose  that every $4$ conjugates of $x$ generate a soluble subgroup of $G$ of Fitting height at most $h$. Then $x\in F_h(G)$.
\end{prop}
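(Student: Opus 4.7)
The plan is to reduce to the soluble case and then induct on the Fitting height $h$. First, I would show $x$ lies in the soluble radical $R(G)$ of $G$: the four-conjugate hypothesis implies in particular that $\langle x, x^g\rangle$ is soluble for every $g\in G$, so a theorem from \cite{FGG} (Flavell-style characterization of the soluble radical via conjugate pairs) forces $x\in R(G)$. Since $\mathrm{Fit}(R(G))$ is characteristic in $R(G)$ and hence a normal nilpotent subgroup of $G$, one sees inductively that $F_i(R(G))=F_i(G)\cap R(G)$ for all $i$. Thus it suffices to prove $x\in F_h(R(G))$, and I may replace $G$ by $R(G)$ and assume $G$ is soluble.

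Next, I would induct on $h$, with an inner induction on $|G|$ for minimal counterexamples. The case $h=0$ forces $x=1$. For $h=1$, the hypothesis specialises to: any two conjugates of $x$ generate a nilpotent subgroup, and Baer's criterion then yields $x\in\mathrm{Fit}(G)=F_1(G)$. For $h\geq 2$, given a minimal counterexample $G$, one wants to pass to $\bar G=G/F_1(G)$ and apply the inductive hypothesis to $\bar x$; the naive difficulty is that factoring out $F_1(G)$ only guarantees that four conjugates of $\bar x$ generate a subgroup of Fitting height at most $h$ in $\bar G$, not at most $h-1$, so the induction does not apply off the shelf.

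The resolution, and the main obstacle, is to exploit the fact that four conjugates generate a subgroup of Fitting height at most $h$ in order to manufacture elements of strictly smaller Fitting height. A product such as $[x^{g_1},x^{g_2}]\cdot[x^{g_3},x^{g_4}]$ lies in the derived subgroup of a group of Fitting height at most $h$, hence in a subgroup of Fitting height at most $h-1$; the technique developed in \cite{FGG} then allows one to extract from such commutator products an element $y\in F_{h-1}(G)$ with the property that $xy$ (or a related construction) satisfies the four-conjugate hypothesis with parameter reduced by one in $\bar G$, to which induction may be applied to conclude $\bar x\in\mathrm{Fit}(\bar G)=F_h(G)/F_{h-1}(G)$. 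This commutator-calculus step is where the specific number four becomes essential — two commutators are needed simultaneously, each requiring two conjugates — and is the main technical hurdle of the proof.
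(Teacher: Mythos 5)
Note first that the paper does not actually prove this proposition: it is quoted from \cite{S2}, with the remark that the argument there rests on the techniques of \cite{FGG}. Your opening reduction is sound in substance but misattributed: there is no characterization of the soluble radical ``via conjugate pairs'' --- any involution $x$ in a nonabelian simple group has $\langle x,x^g\rangle$ dihedral, hence soluble, for every $g$, yet $x\notin R(G)$. What you need (and what your hypothesis feeds directly, since it concerns four conjugates) is precisely the four-conjugate theorem of \cite{FGG}. Granting that, the identity $F_i(R(G))=F_i(G)$, the reduction to soluble $G$, and the base cases $h=0$ and $h=1$ (Baer's theorem) are all fine.

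The genuine gap is the inductive step for $h\ge 2$, which is where all the content of the proposition lies and which you have left as a sketch resting on a false statement. You claim that a product such as $[x^{g_1},x^{g_2}]\,[x^{g_3},x^{g_4}]$ lies ``in the derived subgroup of a group of Fitting height at most $h$, hence in a subgroup of Fitting height at most $h-1$.'' Passing to the derived subgroup does not lower Fitting height: for $G=S_3\wr C_2$ one has $h(G)=2$ while $G'\cong (C_3\times C_3)\rtimes C_2$ (with the involution inverting both factors) also has Fitting height $2$. It is the nilpotent residual $\gamma_\infty(H)$, not $H'$, that satisfies $h(\gamma_\infty(H))\le h(H)-1$, and commutators of conjugates of $x$ have no reason to land in $\gamma_\infty$. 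The rest of the step --- that the ``technique developed in \cite{FGG}'' produces some $y\in F_{h-1}(G)$ so that ``$xy$ (or a related construction)'' satisfies the hypothesis with parameter $h-1$ in $G/F_1(G)$ --- is a placeholder, not an argument: no such element is constructed, and no reason is given why the four-conjugate hypothesis should descend with a smaller height parameter. You have correctly diagnosed why the naive induction through $G/F_1(G)$ fails, but the proposed repair is both unproved and built on an incorrect lemma, so the proposal does not establish the proposition; the inductive mechanism has to be taken from \cite{FGG} and \cite{S2} and cannot be recovered from the sketch as written.
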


\begin{lem}\label{Fit2} Given  an integer  $e$, there exist two integers $h_0$ and $k_0$ such that if $w$ is a word and $H$ is a normal subgroup of a group $G\in X(k,e,w)$ with $k\ge k_0$
such that $w_{e,G}(H)$ is soluble, then the Fitting height of $w_{e,G}(H)$ is at most $h_0$.
\end{lem}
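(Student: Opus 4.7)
The plan is to bound the Fitting height of $L:=w_{e,G}(H)$ by showing that every $w_e$-value of $G$ lying in $H$ belongs to $F_{h_0}(G)$ for some $h_0=h_0(e)$, and then invoking Proposition \ref{Fla}. Concretely, I will take $k_0=s(4)e^2$, with $s$ the function of Proposition \ref{Seg}, and assume throughout that $k\ge k_0$.

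Fix a $w_e$-value $y\in H$ and any four conjugates $y_i=y^{g_i}$ ($g_i\in G$). Each $y_i$ is again a $w_e$-value, since conjugation distributes over products and preserves $w$-values; by normality of $H$ all $y_i$ lie in $H$, hence in $L$. Set $T=\langle y_1,\ldots,y_4\rangle\le L$, so $T$ is a four-generated soluble subgroup. Since $k\ge e$, every $w_e$-value is also a $w_k$-value (pad with trivial $w$-values), so each $y_i$ generates a cyclic, hence nilpotent, subgroup of exponent dividing $e$; in particular $y_i^e=1$.

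The heart of the argument will be to show that $T'$ has exponent dividing $e$. By Proposition \ref{Seg}, every element of $T'$ is a product of $s(4)$ commutators of the form $[x,y_i]$ with $x\in T$. Using the identity $y_i^{-1}=y_i^{e-1}$, I would rewrite
\[
[x,y_i]=(y_i^x)^{e-1}\,y_i;
\]
since $y_i^x$ is again a $w_e$-value, $(y_i^x)^{e-1}$ is a $w_{e(e-1)}$-value, and consequently $[x,y_i]$ is a $w_{e^2}$-value. It follows that every element of $T'$ is a product of $s(4)$ $w_{e^2}$-values, hence a $w_{s(4)e^2}$-value, i.e.\ a $w_{k_0}$-value; applying the cyclic-nilpotent-subgroup argument once more forces each such element to have order dividing $e$, so $T'$ has exponent dividing $e$.

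Being a finite soluble group of bounded exponent, $T'$ has Fitting height bounded by a function of $e$ via the Hall--Higman theory, whence the Fitting height of $T$ is at most some $h_0=h_0(e)$. Since the four conjugates of $y$ were arbitrary, Proposition \ref{Fla} then yields $y\in F_{h_0}(G)$, and as $L$ is generated by the $w_e$-values of $G$ lying in $H$ we obtain $L\le F_{h_0}(G)$, so the Fitting height of $L$ is at most $h_0$. The main delicate point is the commutator manipulation recognising $[x,y_i]$ as a $w_{e^2}$-value: it pivots on $y_i^{-1}=y_i^{e-1}$, and this (together with the subsequent reduction through Proposition \ref{Seg}) is precisely what dictates the choice $k_0\ge s(4)e^2$.
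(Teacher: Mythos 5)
Your proposal is correct and follows essentially the same route as the paper's proof: the same choice $k_0=s(4)e^2$, the same identification of $[x,y_i]=(y_i^x)^{e-1}y_i$ as a $w_{e^2}$-value, the same use of Proposition \ref{Seg} to bound the exponent of $T'$, Hall--Higman to bound the Fitting height of the four-generated subgroup, and Proposition \ref{Fla} to conclude. The only cosmetic differences are that the paper applies Hall--Higman to $T$ itself (noting $T$ has exponent dividing $e^2$) rather than to $T'$ plus an abelian layer, and places the $w_e$-values in $F_{h_0}(H)$ rather than $F_{h_0}(G)$; both variants are equally valid.
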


\begin{proof} Set $k_0=s(4)e^2$ and choose $G\in X(k,e,w)$ with $k\ge k_0$.
Suppose that $H$ is a normal subgroup of $G$ such that $w_{e,G}(H)$ is soluble and $g\in H$ is a $w_e$-value. Consider four arbitrary conjugates $g_1,g_2,g_3,g_4$ of $g$ and let $T=\langle g_1,g_2,g_3,g_4\rangle$. As $G\in X(k,e,w)$ the orders of $g_i$ divide $e$. For every $x\in G$ we have 
 $[x,g_i]=g_i^{-x}g_i=g_i^{(e-1)x}g_i$. It is clear that this is a $w_{e^2}$-value. By Proposition \ref{Seg} every element of $T'$ is the product of $s(4)$ commutators of the form $[x,g_i]$ for some $i=1,\dots,4$. Therefore every element of $T'$ is a $w_{e^2s(4)}$-value. As $k\ge s(4)e^2$ and  $G\in X(k,e,w)$ it follows that the derived subgroup $T'$ of $T$ has exponent dividing $e$. On the other hand, $T/T'$ is abelian and generated by elements of order dividing $e$. Hence $T$ has exponent dividing $e^2$. According to the Hall-Higman theory \cite{HH} it follows that $T$ has Fitting height bounded by a constant $h_0$ (which depends on $e$ only). Thus by Proposition \ref{Fla} $g\in F_{h_0}(H)$. As this holds for every $w_e$-value $g$ contained in $H$, it follows that $w_{e,G}(H)\le F_{h_0}(H)$, so the Fitting height of $w_{e,G}(H)$ is at most $h_0$.
\end{proof}

\begin{lem}\label{quo2} Assume that $G\in X(k,e,w)$ and let $t$ be an integer such that $1\le t\le k/e$. If $N$ is a normal subgroup of $G$, we have $G/N\in X(t,e,w)$.
\end{lem}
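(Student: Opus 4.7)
The plan is to bound $\exp \bar H$ Sylow by Sylow, lifting each $p$-part of the $w_t$-value generators to a carefully chosen $w_k$-value of $G$. Let $\bar H \le G/N$ be a nilpotent subgroup generated by $w_t$-values $\bar y_1,\dots,\bar y_n$, fix lifts $y_i \in G$ that are $w_t$-values of $G$, and write $H^*$ for the preimage of $\bar H$ in $G$.

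The first step is the observation that every $w_t$-value $y$ of $G$ already satisfies $y^e = 1$: since $t \le k$, padding $y$ with trivial factors $w(1,\dots,1) = 1$ displays it as a $w_k$-value of $G$, so the cyclic subgroup $\langle y\rangle$ is a nilpotent subgroup of $G$ generated by a $w_k$-value, and the hypothesis $G \in X(k,e,w)$ gives $\exp\langle y\rangle \mid e$. Consequently, each power $y_i^m$ with $0 \le m < e$ is a product of $tm < te \le k$ $w$-values, hence a $w_k$-value of $G$; in particular the $p$-part $(y_i)_p = y_i^{m_i}$ (with $m_i < e$) is simultaneously a $p$-element of $G$ and a $w_k$-value, as is every conjugate $((y_i)_p)^h$ with $h \in H^*$.

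Now decompose $\bar H = \prod_p \bar H_p$; it suffices to bound $\exp \bar H_p$ for each prime $p$. Fix $p$, set $\bar z_i := (\bar y_i)_p$, and choose a Sylow $p$-subgroup $Q$ of $H^*$ such that $Q \cap N$ is a Sylow $p$-subgroup of $N$; then $Q$ is in fact a Sylow $p$-subgroup of $QN$. Since $QN/N = \bar H_p$ is normal in the nilpotent group $\bar H$, the subgroup $QN$ is normal in $H^*$, and the Frattini argument applied to $Q$ inside $QN$ inside $H^*$ yields $H^* = QN \cdot N_{H^*}(Q) = N \cdot N_{H^*}(Q)$, so the image $\overline{N_{H^*}(Q)}$ of $N_{H^*}(Q)$ in $\bar H$ is all of $\bar H$. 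Fixing, for each $i$, an element $t_i \in H^*$ with $(y_i)_p^{t_i} \in Q$, the coset $t_i \cdot N_{H^*}(Q)$ is contained in $\{t \in H^* : (y_i)_p^t \in Q\}$, and by the Frattini conclusion its image in $\bar H$ is $\bar H$; hence for every $\bar h \in \bar H$ one can pick $t$ with $\bar t = \bar h$ and $((y_i)_p)^t \in Q$. The element $((y_i)_p)^t$ is a $w_k$-value of $G$ (being a conjugate of the $w_k$-value $(y_i)_p$) whose image in $G/N$ is $\bar z_i^{\bar h}$.

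The subgroup $\tilde P \le Q$ generated by all these $w_k$-values is then a subgroup of the $p$-group $Q$, hence nilpotent, and is generated by $w_k$-values of $G$, so the hypothesis $G \in X(k,e,w)$ forces $\exp \tilde P \mid e$. On the other hand $\tilde P N/N$ contains every $\bar H$-conjugate of every $\bar z_i$ and therefore contains the normal closure of $\{\bar z_i\}$ in $\bar H$; since $\bar H_p = \langle \bar z_i\rangle$ is already normal in $\bar H$ this normal closure equals $\bar H_p$, giving $\tilde P N/N = \bar H_p$ and $\exp \bar H_p \mid \exp \tilde P \mid e$. Assembling over all primes yields $\exp \bar H \mid e$. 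The main obstacle is this Sylow-plus-Frattini construction: without invoking $H^* = N \cdot N_{H^*}(Q)$, the images of $w_k$-values in the single Sylow $Q$ would cover only a coset of $\overline{N_{H^*}(Q)}$ and not all of $\bar H$, so Sylow conjugacy inside $H^*$ alone does not suffice to reach every conjugate $\bar z_i^{\bar h}$.
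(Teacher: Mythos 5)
Your proof is correct, and its overall architecture coincides with the paper's: both arguments reduce to the Sylow $p$-subgroups of the nilpotent group $\bar H$, observe that $\bar H_p$ is generated by the $p$-parts of the generating $w_t$-values, and use the bound $t(e-1)<te\le k$ to recognize these $p$-parts and their conjugates as $w_k$-values of $G$, so that any $p$-subgroup of $G$ which they generate falls under the hypothesis $G\in X(k,e,w)$. The one genuine difference is how each argument produces a single nilpotent subgroup of $G$, generated by $w_k$-values, mapping \emph{onto} $\bar H_p$. The paper invokes Lemma 2.1 of \cite{AFS} (a focal-theorem-type statement) applied to the normal set $X$ of $w_t^m$-values and a Sylow $p$-subgroup $P$ of the preimage, concluding $\bar P=\langle\overline{P\cap X}\rangle$. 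You instead prove the required surjectivity from scratch: the Frattini argument $H^*=N\cdot N_{H^*}(Q)$ guarantees that for every $\bar h\in\bar H$ some conjugate $((y_i)_p)^t$ with $\bar t=\bar h$ lies in the fixed Sylow subgroup $Q$, so the subgroup $\tilde P\le Q$ generated by these conjugates covers the whole normal closure of the $\bar z_i$, that is, all of $\bar H_p$. In effect you have inlined a proof of the focal-type lemma in the special case needed here; your version is self-contained where the paper's is shorter by citation, and the two are otherwise interchangeable.
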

\begin{proof} We put $\bar G=G/N$ and use the bar notation in the quotient. Let $\bar H$ be a nilpotent subgroup of $\bar G$ generated by $w_t$-values $\bar a_1,\dots,\bar a_s$. As $\bar H$ is nilpotent and each $\bar a_i$  has order dividing $e$, a $p$-Sylow subgroup of $\bar H$ is of the form $\bar P=\langle \bar a_1^m,\dots,\bar a_s^m\rangle$, where $p$ is a prime dividing $e$ and  $e=p^\alpha m$, with $(m,p)=1$. Let $P$ be a $p$-Sylow subgroup of $H$ (so that $ PN/N=\bar P$) and let $X$ be the set of the values of the word $w_t^m$ in $G$. Then $X$ is a normal subset of $H$ consisting of $p$-elements. By \cite[Lemma 2.1]{AFS} $\bar P=\langle \overline{P\cap X}\rangle$. As $tm\le te\le k$, the subgroup $\langle P\cap X\rangle$ is a nilpotent subgroup of $G$ generated by $w_k$ values. Therefore $\langle P\cap X\rangle$ has exponent dividing $e$, whence also $\bar P$ has exponent dividing $e$. Thus, $\bar H$ has exponent dividing $e$, as required.
\end{proof}

In the next lemma we refine arguments used in Lemma \ref{quo2} and Proposition 3.8 of \cite{CS}.

\begin{lem}\label{prel} Assume that  $G\in X(k,e,w)$ is a group without nontrivial normal soluble subgroups, where $k\ge e^3$. Suppose that $s$ is a positive integer such that $k\ge es$.  Then $G$ possesses a normal subgroup $L$ with the property that $L$ is residually monolithic and  $G/L$ is a subdirect product of groups $G_i\in X(s,e/p_{i},w)$, where $p_i$ is a suitable prime divisor of $e$.
 \end{lem}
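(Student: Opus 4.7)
My strategy is to build $L$ as the intersection of certain normal subgroups of $G$ whose quotients lie in the reduced classes $X(s,e/p,w)$, and then to verify residual monolithicity by comparison with the subgroup supplied by Proposition~\ref{G/L}.

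Since $G$ is nontrivial and has no nontrivial normal soluble subgroups, the Feit--Thompson theorem implies that $G$ has even order. Fixing $r$ as in Lemma~\ref{nr}, Proposition~\ref{G/L} then produces a residually monolithic normal subgroup $L_{0}\trianglelefteq G$. I would then define
\[
L:=\bigcap\{N\trianglelefteq G:G/N\in X(s,e/p,w)\text{ for some prime }p\mid e\},
\]
so that $G/L$ embeds subdirectly in a product of quotients $G/N$, each lying in $X(s,e/p,w)$ for a suitable prime divisor $p_{i}$ of $e$. This delivers the subdirect-product clause of the conclusion essentially by construction.

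The substantive task is then to show that $L$ is residually monolithic, which I would reduce to proving the inclusion $L\subseteq L_{0}$. Granted this inclusion, $L$ is a normal subgroup of $L_{0}$, and residual monolithicity passes from $L_{0}$ to $L$ as follows: for any $x\in L\setminus\{1\}$, pick $M\trianglelefteq L_{0}$ with $L_{0}/M$ monolithic and $x\notin M$; since $L\trianglelefteq L_{0}$, the image of $L$ in $L_{0}/M$ is a normal subgroup whose nontriviality (witnessed by $x$) forces it to contain the simple socle $T$ of $L_{0}/M$, so $L/(L\cap M)$ is almost simple and hence monolithic. To establish $L\subseteq L_{0}$ it suffices to show that every $N\trianglelefteq G$ with $G/N$ monolithic satisfies $G/N\in X(s,e/p,w)$ for some $p\mid e$. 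I would prove this by refining the Sylow-extraction argument of Lemma~\ref{quo2}: given a nilpotent subgroup $\bar H\le G/N$ generated by $w_{s}$-values, its $p$-Sylow $\bar P$ is generated by the $m$-th powers of these generators (where $e=p^{\alpha}m$), which are $w_{sm}$-values; since $sm\le se\le k$, they are in particular $w_{k}$-values in $G$, and $X(k,e,w)$ then yields $\exp\bar P\mid p^{\alpha}$. The sharper bound $\exp\bar P\mid p^{\alpha-1}$ for at least one prime $p\mid e$ is extracted by iterating the Sylow-extraction using the stronger hypothesis $k\ge e^{3}$, combined with the structural constraints imposed on $\bar H$ by the non-abelian simple socle of $G/N$ and Schreier-type control of $\mathrm{Out}(T)$.

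The main obstacle will be this final reduction of the $p$-exponent from $p^{\alpha}$ to $p^{\alpha-1}$ in monolithic quotients: it is essentially a CFSG-driven statement about the Sylow structure of nilpotent $w_{s}$-generated subgroups of almost simple groups, and is the point at which the hypothesis $k\ge e^{3}$ (strictly stronger than $k\ge es$ whenever $s<e^{2}$) becomes indispensable.
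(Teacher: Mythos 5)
There is a genuine gap, and it sits exactly where you place ``the main obstacle''. Your plan makes the subdirect-product clause trivial by defining $L$ as the intersection of all kernels of quotients lying in some $X(s,e/p,w)$, and then concentrates the entire content of the lemma in the claim that $L$ is residually monolithic, which you propose to obtain from the inclusion $L\le L_0$ with $L_0$ as in Proposition \ref{G/L}. But the reduction of that inclusion to the statement ``every normal $N$ with $G/N$ monolithic satisfies $G/N\in X(s,e/p,w)$'' is a non sequitur: Proposition \ref{G/L} only asserts the existence of \emph{some} residually monolithic normal subgroup $L_0$ with $n_r(G/L_0)<n_r(G)$; it is not characterized as (nor comparable with) the intersection of the kernels of the monolithic quotients of $G$, so knowing that all those kernels belong to your defining family yields no containment between $L$ and $L_0$. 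Worse, the auxiliary claim is itself both unproved --- you defer the drop from $p^\alpha$ to $p^{\alpha-1}$ to an unspecified CFSG argument --- and almost certainly unavailable: if $G$ is itself almost simple, taking $N=1$ your claim would force $G\in X(s,e/p,w)$, which nothing in the hypotheses supports; the lemma handles that case simply by taking $L=G$.

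The actual mechanism is different and does not pass through monolithic quotients at all. For each minimal normal subgroup $M=S_1\times\dots\times S_t$ one takes $L_M$ to be the kernel of the permutation action of $G$ on $\{S_1,\dots,S_t\}$ and proves $G/L_M\in X(s,e/p,w)$ directly. If $w_{e,G}(M)=1$, Lemma \ref{we} gives $w(G)\le C_G(M)\le L_M$ and the quotient is degenerate; otherwise one manufactures a nontrivial $w_{e^2}$-value $z=[b,y]$ inside a single factor $S_1$, picks a prime $p$ dividing its order, and rules out an element $\bar x$ of order $p^\alpha$ in a Sylow $p$-subgroup of a nilpotent $w_s$-generated subgroup of $G/L_M$ as follows: a suitable preimage $x$ permutes $p^\alpha$ of the factors regularly, and the element $z_1x$ (where $z_1=z^m$ is a $p$-element and a $w_{e^2m}$-value, hence a $w_k$-value because $e^2m<e^3\le k$ --- this is where $k\ge e^3$ is really used, not in any Sylow analysis of almost simple groups) lies in a nilpotent subgroup generated by $w_k$-values yet satisfies $(z_1x)^{p^\alpha}=z_1z_1^{x^{-1}}\cdots z_1^{x}\ne 1$, since these conjugates land in distinct factors $S_i$. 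Finally $L=\bigcap_M L_M$ is residually monolithic because it normalizes each simple factor $N_i$ of the socle and, the centralizer of the socle being trivial, embeds into the direct product of its monolithic images in the groups $\mathrm{Aut}(N_i)$. Your observation that normal subgroups of residually monolithic groups are again residually monolithic is correct, but it is not needed once $L$ is built this way.
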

\begin{proof} Choose a minimal normal subgroup $M$ in $G$. Write $M=S_1\times\dots\times S_t$, where the $S_i$'s are isomorphic nonabelian simple subgroups and let $L_M$ be the kernel of the natural permutation action of $G$ on the set $\{S_1,\dots,S_t\}$. We put $\bar G=G/L_M$ and use the bar notation in the quotient. Suppose first that $w_{e,G}(M)\neq 1$ Let $b$ be a nontrivial $w_e$-value lying in $M$ and write $b=b_1\dots b_t$, where $b_i\in S_i$. Choose $j$ such that that $b_j\ne 1$. Since $G$ acts on the set $\{S_1,\dots,S_t\}$ transitively, without loss of generality we can assume that $j=1$. Let now $y\in S_1$ be an element which does not centralize $b_1$.
Then $z=[b,y]=b^{-1}b^y=b_1^{-1}b_1^y$ is a non-trivial $w_{e^2}$-value lying in $S_1$. Since $k\ge e^3$ and $G\in X(k,e,w)$, the order of $z$ divides $e$.

Let $p$ be a prime dividing the order of $z$. Let $\bar H$ be a nilpotent subgroup of $\bar G$ generated by $w_{s}$-values $\bar a_1,\dots,\bar a_v$. The subgroup $\bar H$ is the direct product of its Sylow subgroups. Let $\bar R$ be a $r$-Sylow subgroup of $\bar H$. If $e=r^\alpha m$ with $(r,m)=1$, then $\bar R=\langle \bar a_1^m,\dots,\bar a_v^m\rangle$. Denote by $X$ the set of all values of the word $w_{s}^m$ in $G$. 
As $s\le k$, it follows that $X$ is a normal subset of $G$ consisting of $r$-elements of order dividing $e$. 

Let $R$ be an $r$-subgroup of $G$ such that $RL_M/L_M=\bar R$ and let $T$ be an $r$-Sylow subgroup of $G$ containing $R$. Then $\langle T\cap X\rangle$ is a nilpotent subgroup of $G$ generated by $w_k$-values and so it has exponent dividing $e$ because $G\in X(k,e,w)$. By Lemma 2.1 of \cite{AFS} $\langle \bar T\cap \bar X\rangle=\langle \overline{T\cap X}\rangle$. So $\bar R=\langle \bar a_1^m,\dots,\bar a_v^m\rangle\le\langle \overline{T\cap X}\rangle$ has exponent dividing $e$. If $r\ne p$, then the maximum power of $r$ dividing $e$ is the same as the maximum power of $r$ dividing $e/p$, so $\bar R$ has exponent dividing $e/p$. Suppose now that $r=p$ and assume by contradiction that there exists $\bar x\in \bar R$ such that $\bar x^{e/p}\ne 1$. Recall that $e=p^\alpha m$. Hence $\bar x$ has order $p^\alpha$. Since $\bar x\in \bar R\le \langle \overline{T\cap X}\rangle$, we can choose  $x\in \langle T\cap X\rangle$ such that $xL_M=\bar x$. The element $x$ permutes regularly $p^\alpha$ subgroups among the $S_i$'s and we can assume that $S_1$ is one of those. As $e^2\le k$, the element $z$ is a $w_k$-value and therefore it has order dividing $e$. Set $z_1=z^m$ and notice that $z_1$ is a nontrivial $p$-element which is also a $w_k$-value, 
because $e^2m<e^3\le k$. It is clear that $T\cap M$ is a $p$-Sylow subgroup of $M$, so it is the direct product of suitable $p$-Sylow subgroups of $S_i$, one for each $i$. Thus we can conjugate $z$ by a suitable element of $S_1$ and assume that $z_1\in T$. As $z_1$ is a $w_{e^2}^m$-value, we see that $z_1\in T\cap X$. Therefore $z_1x\in\langle T\cap X\rangle$, which is a nilpotent subgroup of $G$ generated by $w_k$-values. Hence, $z_1x$ has order diving $e$. In particular, as $z_1x$ is a $p$-element, its order divides $p^\alpha$. But $$(z_1x)^{p^\alpha}=z_1z_1^{x^{-1}}z_1^{x^{-2}}\dots z_1^{x}$$ and as all different conjugates of $z_1$ in the above expression lie in different factors $S_i$'s, it follows that $(z_1x)^{p^\alpha}\neq1$, a contradiction. Thus, $\bar G\in X(s,e/p,w)$.

Now consider the case where our minimal normal subgroup $M$ has the property that $w_{e,G}(M)=1$. By Lemma \ref{we} we have $[w(G),M]=1$ and therefore $w(G)\le L_M$. In this case $\bar G\in X(e^2,1,w)\leq X(s,e/p_i,w)$ for any prime divisor $p_i$ of $e$.

Let $L$ be the intersection of all the subgroups $L_M$ where $M$ ranges through the minimal normal subgroups of $G$. Then $G/L$ is a subdirect product of the groups $G/L_M$, so we just need to show that $L$ is residually monolithic. If $N$ is the product of all minimal normal subgroups of $G$, it is clear that $N$ is the direct product of pairwise commuting simple groups $N_1,\dots, N_l$ and $L$ is the intersection of the normalizers of the $N_i$'s. The subgroup $L$ acts on each $N_i$ by conjugation and let $\rho_i:N\to \mathrm{Aut}(N_i)$ be the natural homomorphism. It is easy to see that the image of $\rho_i$ is monolithic. Since $G$ has no nontrivial normal soluble subgroups it follows that $C_G(N)=1$ and so $L$ embeds into the direct product of the groups $\rho_i(N)$ for $i=1,\dots,l$. The lemma follows.
 \end{proof}

Now we are ready to present a proof of Theorem B.
\begin{proof}
[Proof of Theorem B] 
We will use induction on $e$ to prove that there exist two numbers $k=k(e)$ and $E=E(e,w)$ such that if $G\in X(k,e,w)$, then $w(G)$ has exponent dividing $E$. If $e=1$, the result is trivial. Therefore we assume that $e\ge 2$ and there exist constants $k_1$ and $E_1$ such that the exponent of $w(D)$ divides $E_1$ whenever $D\in X(k_1,e/p,w)$ for some prime divisor $p$ of $e$. We let $k_0$ and $h_0$ have the same meaning as in  Lemma \ref{Fit2} and denote by $m$ the maximum of $\{e^2,k_0,k_1\}$. By Lemma \ref{L} there exists a constant $E_0$ depending only on $w$ and $e$ such that the exponent of any residually monolithic group satisfying the law $w^e\equiv1$ has exponent dividing $E_0$. 

We will show first that the constants $k_2=me$ and $E_2=E_0E_1$ have the property that if $G$ is a finite group in $X(k_2,e,w)$ without nontrivial normal soluble subgroups, then the exponent of the verbal subgroup $w(G)$ divides $E_2$. Thus, take $G\in X(k_2,e,w)$ and assume that $G$ has no nontrivial normal soluble subgroups. By Lemma \ref{prel} applied  with $s=m$, $G$ possesses a normal subgroup $L$ such that $L$ is residually monolithic and  $G/L$ is the subdirect product of groups $G_i\in X(m,e/p_{i},w)$, where $p_i$ is a suitable prime dividing $e$. By induction the exponent of $w(G_i)$ divides $E_1$ for every $i$ and we know that the exponent of $L$ divides $E_0$. It follows that indeed the exponent of $w(G)$ divides $E_2$.

We will now deal with the group $G\in X(k_2,e,w)$ but we drop the assumption that $G$ has no nontrivial normal soluble subgroups. The symbol $S(D)$ will stand for the soluble radical of a group $D$. Let $S=S(G)$ and $T=w_{e,G}(S)$. Since $k_2\ge k_0$, by Lemma \ref{Fit2}  
 it follows that the Fitting height $h=h(T)$ of $T$ is at most $h_0$. From now on we will be using $h$ as a second induction parameter.

Consider the case where $h=0$. This happens if and only if $T=1$. By Lemma \ref{we} $[S,w(G)]=1$. Therefore every nilpotent subgroup generated by $w_{k_2}$-values in $G/S$ is an image of a nilpotent subgroup generated by $w_{k_2}$-values in $G$. Hence it must have exponent dividing $e$. It follows that $G/S$ belongs to $X(k_2,e,w)$. Since $G/S$ has no nontrivial normal soluble subgroups, the above argument shows that $w(G/S)$ has exponent dividing $E_2$. Since $[S,w(G)]=1$, it follows that $w(G)/Z(w(G))$ has exponent dividing $E_2$. By Lemma \ref{mann+} we conclude that $w(G)'$ has $(e,w)$-bounded exponent. As $w(G)/w(G)'$ is abelian and generated by elements of order dividing $e$, it has exponent dividing $e$. Therefore the exponent of $w(G)$ is $(e,w)$-bounded. Thus, in the case where $h=0$ the result is established.

We will now assume that $h\ge1$. The induction hypothesis will be that there exist two numbers $k_3\geq k_0$ and $E_3$ such that if $D\in X(k_3,e,w)$ and $h(w_{e,D}(S(D)))\leq h-1$, then $w(D)$ has exponent dividing $E_3$. Put $k_4=ek_3$ and assume that our group $G$ belongs to $X(k_4,e,w)$.

Let $F$ be the Fitting subgroup of $T$ and $M=w_{e,G}(F)$. Since $M$ is nilpotent, it has  exponent dividing $e$. So we can consider $\bar G=G/M$ and it is enough to prove that $w(\bar G)$ has bounded exponent. By Lemma \ref{we}, $[\bar F,w(\bar G)]=1$. This implies in particular that $[\bar F,\bar T]=1$. Hence, $\bar F\le Z(\bar T)$ and so the Fitting height of $\bar T$ is smaller than $h$. Moreover, by Lemma \ref{quo2}, $\bar G\in X(k_3,e,w)$ and so by induction the exponent of $w(\bar G)$ divides $E_3$. Therefore the exponent of $w(G)$ divides $eE_3$ and this completes the induction step. As $h\le h_0$  is $e$-bounded,  the theorem follows.
\end{proof}

{\bf{Acknowledgements}}
This research was partially supported by MIUR (project Teoria dei gruppi 
e applicazioni). The third author also acknowledges the support of CNPq - Brazil.


\begin{thebibliography}{99}
%
\bibitem{AFS} Acciarri, C.,  Fern\'andez-Alcober, G.A., Shumyatsky, P.: { A focal theorem for outer commutator words.}
 J. Group Theory 15 no. 3, 397--405  (2012)
%
%
\bibitem{BM}
 Burns, R.G.,  Medvedev, Y.:{ A note on Engel groups and local nilpotence}.
J. Austral. Math. Soc. Ser. A 64  no. 1, 92--100 (1998)
%
\bibitem{CS}
 Caldeira, J., Shumyatsky, P.: { On verbal subgroups in residually finite groups},
Bull. Aust. Math. Soc. 84  no. 1, 159--170 (2011)
%
\bibitem{CP}
Casolo, C., Puglisi, O.: { Nil-automorphisms of groups with residual properties}, arXiv:1203.3645.
%
\bibitem{FT} Feit, W., Thompson, J.: { Solvability of groups of odd order}. Pacific J.
Math. 13,  773--1029 (1973)
%
\bibitem{FGG}
Flavell, P., Guest, S., Guralnick, R.: { Characterization of the solvable radical}. 
Proc. Amer. Math. Soc. 138 no. 4, 1161--1170  (2010)
%
\bibitem{HH}
Hall, P., Higman, G.: { On the $p$-length of a $p$-soluble group and reduction 
theorems for Burnside's problem}. Proc. Lond. Math. Soc. 6, 1--42 (1956)
%
\bibitem{J}  Jones, G.A.:{ Varieties and simple groups}. J. Aust. Math. Soc. 17,  163--173 
(1974)
%
\bibitem{lost} Liebeck, M.W., O'Brien, E. A.,  Shalev, A., Tiep, P.H.: {The Ore conjecture}. {J. Eur. Math. Soc.} {12}(4), 939 --1008  (2010)

\bibitem{M} Mann, A.: { The exponent of central factors and commutator groups}. J. Group Theory 10, 435--436  (2007)
%
\bibitem{nisegal} Nikolov, N., Segal, D.:  {On finitely generated profinite groups, I: strong completeness and uniform bounds}. {Ann. of Math.} {165}, 171--238  (2007)

\bibitem{Rob}
Robinson, D. J. S.:
  A course in the Theory of Groups. Graduate Texts in Mathematics, 80. Springer-Verlag, New York (1993)
%
\bibitem{Se}
Segal, D.: { Closed subgroups of profinite groups}. Proc. London Math. Soc (3) 81, 29--54  (2000)
%

\bibitem{segal} Segal, D.: {Words: notes on verbal width in groups}. LMS Lecture Notes {361}, Cambridge Univ. Press, Cambridge (2009)

\bibitem{shal} Shalev, A.:  {Word maps, conjugacy classes, and a noncommutative Waring-type theorem}. {Ann. of Math.} {170}, 1383--1416  (2009)


\bibitem{S}
Shumyatsky, P.: { On the exponent of a verbal subgroup in a finite group}. J. Aust. Math. Soc. 
 doi:10.1017/S1446788712000341, to appear.
%

\bibitem{S2}
Shumyatsky, P.: { Multilinear commutators in residually finite groups}, Israel J. Math. 189, 207--224  (2012)
%
\bibitem{T}
Turull, A.: {Fitting height of groups and of fixed points}, J. Algebra 86, 555--566 (1984)
%

\bibitem{ze1} Zelmanov, E.: {Solution of the Restricted Burnside Problem 
for groups of odd exponent}. Math. USSR Izv. {36},  41--60 (1991)

\bibitem{ze2} Zelmanov, E.: {Solution of the Restricted Burnside Problem 
for 2-groups.} {Math. Sb.} {82}, 568--592  (1991)

\bibitem{Z} Zorn, M.: { Nilpotency of finite groups}, Bull. Amer. Math. Soc. 42,  485--486 (1936)
\end{thebibliography}


\end{document}